\tikzset{                        
    symbol/.style={%              % symbol instead of arrow
        ,draw=none
        ,every to/.append style={%
            edge node={node [sloped, allow upside down, auto=false]{$#1$}}}
    }
}
\newtheorem{thm}{Theorem}[section]
\newtheorem{cor}[thm]{Corollary}
\newtheorem{lem}[thm]{Lemma}
\newtheorem{prop}[thm]{Proposition}
\theoremstyle{plain} % just in case the style had changed
\newcommand{\thistheoremname}{}
\newtheorem*{genericthm}{\thistheoremname}
\theoremstyle{definition}
\theoremstyle{remark}
\newtheorem{rem}[thm]{Remark}
\newtheorem*{xrem}{Remark}
\numberwithin{equation}{section}
\newcommand{\Z}{\mathbb{Z}}      % Z = Integers
\newcommand{\Q}{\mathbb{Q}}      % Q = Rationals
\newcommand{\R}{\mathbb{R}}      % R = Reals
\newcommand{\norm}[1]{\left\lVert #1\right\rVert} % norm
\newcommand\restr[2]{{           % restriction of function
  \left.\kern-\nulldelimiterspace #1%
  \right|_{#2}%
 }}
\newcommand\minus{               % small minus
  \setbox0=\hbox{-}%
  \vcenter{%
    \hrule width\wd0 height \the\fontdimen8\textfont3%
  }%
}
\renewcommand{\pmod}[1]{         % (mod #1)
  ~(\mathrm{mod}~#1)}
\newcommand{\lmod}[3]{         % #1 == #2 (mod #3) [long mod]
  #1 \equiv #2 \pmod{#3}}
\newcommand{\Fr}{\mathrm{Fr}}   % Frobenius number
\renewcommand{\P}{\mathcal{P}}
\newcommand{\ul}[1]{\underline{#1}}
\newcommand{\ub}[1]{\mathbf{#1}}
\newcommand{\cH}{\mathcal{H}} %Convex hull
\newcommand{\ex}{\mathrm{ex}}
\setlist[itemize]{leftmargin=*}
\setlist[enumerate]{leftmargin=*}
 \newcommand*\alphgreek[1]{\expandafter\@alphgreek\csname c@#1\endcsname}
 \newcommand*\@alphgreek[1]{\csname chemgreek_int_to_greek:n\endcsname{#1}}
 \newcommand*\Alphgreek[1]{\expandafter\@Alphgreek\csname c@#1\endcsname}
 \newcommand*\@Alphgreek[1]{\csname chemgreek_int_to_Greek:n\endcsname{#1}}
 \AddEnumerateCounter*{\alphgreek}{\@alphgreek}{\chemalpha}
 \AddEnumerateCounter*{\Alphgreek}{\@Alphgreek}{\chemAlpha}
\def\@tocline#1#2#3#4#5#6#7{\relax
  \ifnum #1>\c@tocdepth % then omit
  \else
    \par \addpenalty\@secpenalty\addvspace{#2}%
    \begingroup \hyphenpenalty\@M
    \@ifempty{#4}{%
      \@tempdima\csname r@tocindent\number#1\endcsname\relax
    }{%
      \@tempdima#4\relax
    }%
    \parindent\z@ \leftskip#3\relax \advance\leftskip\@tempdima\relax
    \rightskip\@pnumwidth plus4em \parfillskip-\@pnumwidth
    #5\leavevmode\hskip-\@tempdima
      \ifcase #1
       \or\or \hskip 1em \or \hskip 2em \else \hskip 3em \fi%
      #6\nobreak\relax
    \dotfill\hbox to\@pnumwidth{\@tocpagenum{#7}}\par
    \nobreak
    \endgroup
  \fi}
\begin{document}

\title{On the size and structure of \texorpdfstring{$\MakeLowercase{t}$}{\MakeLowercase{t}}-representable sumsets}%
\author{Christian T\'afula}%
\address{D\'epartment de Math\'ematiques et Statistique, %
 Universit\'e de Montr\'eal, %
 CP 6128 succ Centre-Ville, %
 Montreal, QC H3C 3J7, Canada}%
\email{christian.tafula.santos@umontreal.ca}%

\subjclass[2020]{11D07, 05A17}%
\keywords{Frobenius problem, sumsets, representation function, Khovanskii}%

% ----------------------------------------------------------------
 \begin{abstract}
  Let $A\subseteq \mathbb{Z}_{\geq 0}$ be a finite set with minimum element $0$, maximum element $m$, and $\ell$ elements strictly in between. Write $(hA)^{(t)}$ for the set of integers that can be written in at least $t$ ways as a sum of $h$ elements of $A$. We prove that $(hA)^{(t)}$ is ``structured'' for
  \[ h \geq (1+o(1)) \frac{1}{e} m\ell t^{1/\ell} \]
  (as $\ell \to \infty$, $t^{1/\ell} \to \infty$), and prove a similar theorem on the size and structure of $A\subseteq \mathbb{Z}^d$ for $h$ sufficiently large. Moreover, we construct a family of sets $A = A(m,\ell,t)\subseteq \mathbb{Z}_{\geq 0}$ for which $(hA)^{(t)}$ is not structured for $h\ll m\ell t^{1/\ell}$.
 \end{abstract}
\maketitle
% ----------------------------------------------------------------

%%%%%%%%%%%%%%%%%%%%%%%%%%%%%%%%%%%%%%%%%%%%%%%%%%%%%%%%%
\section{Introduction}
 For an integer $m\geq 0$, write $[m]:= \{0,1,\ldots,m\}$. Given a finite set of integers $A$, we are interested in determining the elements of the sumset 
 \[ hA = \bigg\{ \sum_{a\in A} k_a\cdot a ~\bigg|~ k_a\in \Z_{\geq 0},\, \sum_{a\in A} k_a = h \bigg\}. \]
 Throughout we will suppose, without loss of generality, that $\min_{a\in A} a = 0$ and $\gcd(A) = 1$,\footnote{For if $A' := \{ (a-a_0)/d ~|~ a\in A\}$, where $a_0 := \min_{a\in A} a$, $d:= \gcd\{a-a_0 ~|~ a\in A\}$, then $hA$ can be recovered from $hA'$ after an affine transformation: $hA = ha_0 + d\cdot (hA')$, where $d\cdot B = \{d\cdot a ~|~ a\in B\}$.} and write 
 \[ A=\{0 = a_0 < a_1 <\cdots < a_{\ell} < a_{\ell+1} = m\} \]
 (so $|A| = \ell+2$), where $m$ denotes the maximum element of $A$.
 
 Writing
 \[ \P(A) := \bigcup_{h\geq 1} hA \]
 for the set of all integers expressible as a finite sum of elements of $A$, define the \emph{exceptional set} of $A$ as
 \[ \mathcal{E}(A) := \Z_{\geq 0}\setminus \P(A). \]
 So, since $A\subseteq 2A \subseteq 3A \subseteq \cdots$, we have $hA \subseteq [hm]\setminus \mathcal{E}(A)$. Moreover, writing $m-A = \{m-a ~|~ a\in A\}$, we have $h(m-A) = hm- hA$, so if $n\in \mathcal{E}(m-A)$ then $hm-n\notin hA$. Therefore,
 \begin{equation}
  hA \subseteq [hm]\setminus \big(\mathcal{E}(A) \cup (hm-\mathcal{E}(m-A)) \big). \label{str1}
 \end{equation}
 If equality holds true in \eqref{str1}, then $hA$ is said to be \emph{structured}.\footnote{One might also say that $hA$ \emph{stabilizes} for large $h$.} Nathanson \cite{natSOFSI} showed in 1972 the existence of a smallest $h_1= h_1(A)$ such that $hA$ is structured for every $h\geq h_1$. Nathanson himself proved that $h_1 \leq m^2(\ell+1)$; this was improved to $h_1 \leq \sum_{i=2}^{\ell+1} (a_i - 1) -1$ by Chen--Chen--Wu \cite{chechewu11}, then $h_1\leq 2\lfloor m/2\rfloor$ by Granville--Shakan \cite{grasha20}, and then $h_1\leq m-\ell$ by Granville--Walker \cite{grawal21}. In 2022, Lev \cite{lev22} showed that if $h\leq \max\{m - \tfrac{3}{2}(\ell-1),\, \tfrac{2}{3}(m-\ell)\}$ then $hA$ is structured except if $A$ or $m-A = \{0,1,h+2,\ldots,m\}$, in which case Granville--Walker's ``$m-\ell$'' is sharp.
 
 \subsection{\texorpdfstring{$t$}{t}-representables}\label{ssec12}
 What if instead of integers that can be represented at least once as a sum of $h$ elements of $A$, we asked the same question for integers that can be represented at least two times? Or $t$ times? Define
 \begin{align*}
  r_{A,h}(n) :=&\, \#\{(k_0,\ldots,k_{\ell+1})\in \Z_{\geq 0}^{\ell+2} ~|~ k_0 a_0 +\cdots + k_{\ell+1}a_{\ell+1} = n,\ \textstyle\sum_{i=0}^{\ell+1} k_i = h\} \\
  =&\, \#\{(k_1,\ldots,k_{\ell+1})\in \Z_{\geq 0}^{\ell+1} ~|~ k_1a_1 +\cdots + k_{\ell+1}a_{\ell+1} = n,\ \textstyle\sum_{i=1}^{\ell+1} k_i \leq h\}
 \end{align*}
 (since $a_0=0$) for the \emph{representation function} of $hA$. Note that $a_1<a_2<\cdots < a_{\ell+1}$, so switching the order of summation does not get counted as a new sum. For every integer $t\geq 1$, define
 \[ (hA)^{(t)} := \{n\in \Z_{\geq 0} ~|~ r_{A,h}(n) \geq t\}. \]
 In 2021, Nathanson \cite{natSOFSII} showed that $(hA)^{(t)}$ enjoys a similar notion of structure for large $h$. Writing $\P_t(A) := \bigcup_{h\geq 1} (hA)^{(t)}$, define the \emph{$t$-exceptional set} of $A$ as $\mathcal{E}_t(A) := \Z_{\geq 0} \setminus \P_t(A)$. Then, there exists a smallest $h_t = h_t(A)$ such that
 \begin{equation}
  (hA)^{(t)} = [hm]\setminus \big(\mathcal{E}_t(A) \cup (hm-\mathcal{E}_t(m-A)) \big) \label{strt}
 \end{equation}
 for every $h \geq h_t$. When \eqref{strt} holds, $(hA)^{(t)}$ is said to be \emph{structured}. Nathanson showed that $h_t\leq m\ell\, (tm-1) +1$, which was improved to $h_t \leq \sum_{i=2}^{\ell+1} (ta_i - 1) -1$ by Yang--Zhou \cite{yanzho21} in the same year.
 
 The \emph{Frobenius number}
 \[ \Fr(A) := \max_{n\in\mathcal{E}(A)} n \]
 \textnormal{(with $\Fr(A) := 0$ if $\mathcal{E}(A)=\varnothing$)} is easily seen to be finite,\footnote{As is well-known, there are $x_1,\ldots, x_{\ell+1}\in \Z$ such that $\sum_{i=1}^{\ell+1} a_i x_i = 1$. Taking $M := a_1\max_{i} |x_i|$ and $N := \sum_{i=1}^{\ell+1} Ma_i$, we have $N+k = \sum_{i=1}^{\ell+1} (M+k x_i)a_i \in \P(A)$ for every $0\leq k< a_1$. Therefore, since $a_1 + \P(A) \subseteq \P(A)$, we have $\{N,N+1,\ldots\} \in \P(A)$, so $\Fr(A) < N$.} and the problem of estimating $\Fr(A)$ dates back at least to Sylvester {\cite[p. 134]{syl1882}}, who in 1882 showed that if $A = \{0<a_1<a_2\}$, then $\Fr(A) = a_1a_2 - a_1 - a_2$. Analogously, define
 \[ \Fr_t(A) := \max_{n\in\mathcal{E}_t(A)} n, \]
 which will be shown to be finite in Corollary \ref{lbrhoC}.
 
 \begin{xrem}[Case $\ell=1$]
  The case $|A|=3$, which is discussed in the appendix at the end, enjoys the property that $(hA)^{(t)}$ is structured for every $h$, $t\geq 1$. The case $t=1$ was proven by Granville--Shakan \cite{grasha20}. Accordingly, in Theorems \ref{MT1}, \ref{MT2} we only consider $\ell \geq 2$.
 \end{xrem}
 
 \begin{thm}\label{MT1}
  Suppose $|A|\geq 4$. The set $(hA)^{(t)}$ is structured as in \eqref{strt} for every
  \[ h \geq \left\lfloor\frac{\Fr_t(A)+m}{a_1}\right\rfloor + \left\lfloor\frac{\Fr_t(m-A)+m}{m-a_{\ell}}\right\rfloor. \]
 \end{thm}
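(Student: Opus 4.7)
The plan is to reduce the theorem to two dual shift arguments. The easy containment $(hA)^{(t)} \subseteq [hm]\setminus (\mathcal{E}_t(A) \cup (hm - \mathcal{E}_t(m-A)))$ follows at once from $(hA)^{(t)} \subseteq \mathcal{P}_t(A)$ together with $hm - (hA)^{(t)} = (h(m-A))^{(t)} \subseteq \mathcal{P}_t(m-A)$, where the equality uses the bijection $n \mapsto hm - n$ between representations of $n$ in $hA$ and representations of $hm-n$ in $h(m-A)$.

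For the reverse containment, fix $n \in [hm]$ with $n \in \mathcal{P}_t(A)$ and $hm-n \in \mathcal{P}_t(m-A)$; we want $\rho_{A,h}(n) \geq t$. Rewriting this count as the number of $(k_1,\ldots,k_{\ell+1}) \in \Z_{\geq 0}^{\ell+1}$ with $\sum_i k_i a_i = n$ and $\sum_i k_i \leq h$, we note that every such tuple satisfies $\sum_i k_i \leq n/a_1$. Consequently, the hypothesis $n \in \mathcal{P}_t(A)$ already gives $\geq t$ qualifying tuples whenever $n \leq h a_1$, and, dually (the smallest nonzero element of $m-A$ being $m-a_\ell$), the hypothesis $hm-n \in \mathcal{P}_t(m-A)$ gives the same whenever $n \geq h a_\ell$. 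What remains is the gap $h a_1 < n < h a_\ell$, which is nonempty for $\ell \geq 2$.

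The gap is bridged by a shift. Write $h_L := \lfloor(\Fr_t(A)+m)/a_1\rfloor$ and $h_R := \lfloor(\Fr_t(m-A)+m)/(m-a_\ell)\rfloor$. If $n > \Fr_t(A)$, pick the unique $k \geq 0$ with $n'' := n - km \in (\Fr_t(A), \Fr_t(A)+m]$. Since $n'' \in \mathcal{P}_t(A)$, it has at least $t$ representations $(v_1,\ldots,v_{\ell+1})$, each with $\sum_i v_i \leq n''/a_1 \leq h_L$. The injective lift $(v_1,\ldots,v_{\ell+1}) \mapsto (v_1,\ldots,v_\ell,v_{\ell+1}+k)$ then produces $\geq t$ representations of $n$ of total weight $\leq h_L + k$, yielding $\rho_{A,h}(n) \geq t$ as soon as $k \leq h - h_L$; by the choice of $k$ this is ensured for $n \leq \Fr_t(A) + (h - h_L + 1)m$. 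The mirror shift on the $m-A$ side---using that $m - a_0 = m$ is the largest element of $m-A$---yields the analogous bound $\rho_{A,h}(n) \geq t$ for $n \geq hm - \Fr_t(m-A) - (h-h_R+1)m$.

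All that is left is to verify that these two ranges cover $[0,hm]$ when $h \geq h_L + h_R$, which reduces to the inequality $\Fr_t(A) + \Fr_t(m-A) \geq (h_L + h_R - h - 2)m - 1$ and holds with room to spare. The conceptual heart of the proof is the lift step, where the ``$t$-fold'' multiplicity transfers cleanly from $n''$ to $n$ via an injection, so no new representations need to be produced beyond those that $n''$ already supplies. The main technical hazard is the bookkeeping with the floors in $h_L$ and $h_R$ and the integrality of $k$, since these have to align precisely for the two covering intervals to meet; everything else is routine.
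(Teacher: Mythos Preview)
Your proposal is correct and follows essentially the same route as the paper. Both arguments rest on the observation that $\rho_{A,h}(n)=\rho_A(n)$ once $h\geq n/a_1$ (the paper's Lemma~\ref{AheqA}), then reduce a general $n$ to one in $(\Fr_t(A),\Fr_t(A)+m]$ by subtracting copies of $m$ (the paper's Lemmas~\ref{Fpm}--\ref{Fpm2}), run the mirror argument on $m-A$, and finally check that the two covering intervals meet for $h\geq h_L+h_R$; the paper organizes these as separate lemmas while you do them inline, but the content is the same.
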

 
 This bound can be simplified to:
 
 \begin{thm}\label{MT2}
  Suppose $|A|\geq 4$. The set $(hA)^{(t)}$ is structured as in \eqref{strt} for every
  \begin{equation*}
   h\geq C_{A,t}\, \frac{1}{e}m \ell t^{1/\ell},
  \end{equation*}
  where
  \begin{equation*}
   C_{A,t} \leq \bigg(1+\frac{4}{\ell}\bigg)\frac{e}{t^{1/\ell}} + \bigg(1+\frac{2}{\ell}\bigg)\frac{1 + (\log 4\ell)/\ell}{\min\{a_1,m-a_{\ell}\}}.
  \end{equation*}
  In particular, $C_{A,t} \leq 3e$ if $\ell\geq 4$, and $C_{A,t} \leq 1 + o(1)$ as $\ell \to \infty$, $t^{1/\ell}\to\infty$.
 \end{thm}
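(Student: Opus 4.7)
The plan is to derive Theorem~\ref{MT2} from Theorem~\ref{MT1} by bounding $\Fr_t(A)$ and $\Fr_t(m-A)$ via an explicit lattice-point / denumerant estimate, then simplifying the resulting expression with elementary analytic inequalities.

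First, I would establish an upper bound of the form
\[
 \Fr_t(A) \;\leq\; \bigl(\ell! \cdot t \cdot a_1 \cdots a_{\ell+1}\bigr)^{1/\ell} \quad (+\text{ lower-order terms}),
\]
which follows from the classical denumerant asymptotic: the number of non-negative integer representations of $n$ as $k_1 a_1 + \cdots + k_{\ell+1} a_{\ell+1}$ is a quasi-polynomial in $n$ of degree $\ell$ with leading coefficient $1/(\ell!\,a_1 \cdots a_{\ell+1})$. An effective version can be obtained either through a direct Ehrhart-type count of lattice points in the simplex $\{x \in \mathbb{R}_{\geq 0}^{\ell+1}: \sum a_i x_i \leq n\}$, sliced over residue classes modulo $a_1$, or through a Brauer--Shockley Ap\'ery-set analysis.

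Second, I apply two standard simplifications. Stirling's inequality $\ell!\leq e\sqrt{\ell}(\ell/e)^\ell$ yields $(\ell!)^{1/\ell}\leq (\ell/e)(4\ell)^{1/\ell}$, and the AM--GM-type bound $a_1 a_2 \cdots a_{\ell+1} \leq a_1 \cdot m^\ell$ (using $a_{\ell+1}=m$ and $a_i\leq m$ for $2\leq i\leq\ell$) gives
\[
 \frac{\Fr_t(A)}{a_1}\;\leq\;\frac{\ell\,m\,t^{1/\ell}}{e\,a_1}\,(4\ell\,a_1)^{1/\ell},
\]
with a symmetric bound for $\Fr_t(m-A)/(m-a_\ell)$ obtained by applying the same argument to $m-A$.

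Third, I sum the two estimates together with the trivial $m/a_1+m/(m-a_\ell)$ contributions coming from the $+m$ terms in Theorem~\ref{MT1}, and convert the result into the form $C_{A,t}\cdot m\ell t^{1/\ell}/e$. The key analytic inputs are: the Taylor-type bound $e^x\leq (1+2/\ell)(1+x)$ applied at $x=\log(4\ell)/\ell$, which is valid since $(\log(4\ell))^2\leq 4\ell$ for every $\ell\geq 1$ and yields $(4\ell)^{1/\ell}\leq (1+2/\ell)(1+\log(4\ell)/\ell)$; and a careful comparison of $a_1^{-(\ell-1)/\ell}$ and $(m-a_\ell)^{-(\ell-1)/\ell}$ with $1/\min\{a_1,m-a_\ell\}$. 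These yield the $(1+2/\ell)(1+\log(4\ell)/\ell)/\min\{a_1,m-a_\ell\}$ summand of $C_{A,t}$, while the constant-in-$t$ contributions are absorbed into the first summand $(1+4/\ell)e/t^{1/\ell}$ via the identity $(1+4/\ell)(e/t^{1/\ell})\cdot m\ell t^{1/\ell}/e=(1+4/\ell)m\ell$.

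The main obstacle is keeping the multiplicative constants tight. The slack factors $(1+4/\ell)$ and $(1+2/\ell)$ in the statement of $C_{A,t}$ are not cosmetic: they must absorb the floor-function losses and the $m/a_1+m/(m-a_\ell)$ contributions on the one hand, and the combined remainders from Stirling, Taylor, and AM--GM on the other. The bookkeeping is elementary but must be done carefully to match the stated form of the bound; this is where most of the real work lies.
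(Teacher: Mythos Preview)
Your overall strategy---bound $\Fr_t(A)$ via the denumerant lattice-point count (slicing over residues modulo $a_1$), plug into Theorem~\ref{MT1}, then simplify with Stirling---is exactly the paper's route (Proposition~\ref{lbrho} and Corollary~\ref{lbrhoC}, followed by the computation in Section~\ref{sec3}). But the specific AM--GM step you propose is too crude to reach the stated constant, and this is a genuine gap, not just bookkeeping.

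From $a_1a_2\cdots a_{\ell+1}\leq a_1 m^{\ell}$ and its $m-A$ analogue you get, after dividing and summing,
\[
 \frac{\Fr_t(A)}{a_1}+\frac{\Fr_t(m-A)}{m-a_\ell}\;\lesssim\; (\ell!)^{1/\ell}t^{1/\ell}\,m\,\Bigl(a_1^{-(1-1/\ell)}+(m-a_\ell)^{-(1-1/\ell)}\Bigr).
\]
The bracket is only $\leq 2/\min\{a_1,m-a_\ell\}^{1-1/\ell}$, with equality when $a_1=m-a_\ell$; no ``careful comparison'' with $1/\min\{a_1,m-a_\ell\}$ helps, since for $a_1=m-a_\ell=1$ the bracket equals $2$ while $1/\min=1$. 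After Stirling this leaves the main term of $C_{A,t}$ essentially twice the one asserted, so the bound $(1+\tfrac{2}{\ell})(1+(\log 4\ell)/\ell)/\min\{a_1,m-a_\ell\}$ does not follow.

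The paper avoids this loss by applying AM--GM to the two geometric means \emph{before} bounding and then exploiting the symmetry between $A$ and $m-A$:
\[
 (a_2\cdots a_{\ell+1})^{1/\ell}\leq\frac{1}{\ell}\sum_{j=2}^{\ell+1}a_j,\qquad \bigl((m-a_0)\cdots(m-a_{\ell-1})\bigr)^{1/\ell}\leq\frac{1}{\ell}\sum_{j=0}^{\ell-1}(m-a_j),
\]
and the point is that these arithmetic means nearly telescope when added:
\[
 \sum_{j=2}^{\ell+1}a_j+\sum_{j=0}^{\ell-1}(m-a_j)=(\ell+1)m+a_\ell-a_1<(\ell+2)m.
\]
This is what produces the factor $(\ell+2)/\ell=1+2/\ell$ in place of your $2$. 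A secondary difference: the paper uses the one-step Stirling estimate $(\ell!)^{1/\ell}\leq(\ell+\log 4\ell)/e$ directly, so your additional $(1+2/\ell)$ from the Taylor bound on $(4\ell)^{1/\ell}$ is also not needed.
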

 
%  Using the simplified form of Dixmier's estimate \cite[Theorem 3]{dix90} $\Fr(A)$, $\Fr(m-A) \leq (m-1)^2/2\ell$, Theorem \ref{MT2} becomes:
%  \begin{equation}
%   h_t < m\ell t^{1/\ell} + \frac{(m-1)^2}{\ell}. \label{thmspl}
%  \end{equation}
 This improves Yang--Zhou's bound for, e.g., $t\geq 8\ell \geq 32$ -- see Remark \ref{compYZ}.
 
 In Section \ref{sec4}, we construct a family of finite sets $A = A(m,\ell,t)$ for certain $m\geq 5$, $\ell\geq 2$, $t\geq 2$, each with $1$, $m-1\in A$, for which
 \[ h_t(A) \geq (1 + o(1))\, \frac{1}{e} m\ell t^{1/\ell} \]
 as $\ell\to \infty$, $t^{1/\ell}\to \infty$. So Theorem \ref{MT2} is asymptotically tight in this regime.
 
 \subsection{Structure in \texorpdfstring{$\Z^d$}{Z\^{}d}}\label{ssec13}
 A version of Theorem \ref{MT1} holds for finite sets $A$ of arbitrary dimension. Let $d\geq 1$ be a fixed integer and $A\subseteq \Z^d$ be a finite set. Suppose without loss of generality that $\mathrm{span}_{\R}(A) = \big\{\sum_{\ul{a}\in A} c_{\ul{a}}\ul{a} ~|~ c_{\ul{a}}\in \R\big\} = \R^{d}$.\footnote{If $\mathrm{span}_{\R}(A) \simeq \R^{d'}$ for some $d'<d$, then after a change of basis, suppose $A\subseteq \Z^{d'} := \R^{d'} \cap \Z^{d}$.} The \emph{convex hull} of $A$ is given by 
 \[ \cH(A) := \bigg\{\sum_{\ul{a}\in A} c_{\ul{a}} \ul{a} ~\bigg|~ c_{\ul{a}}\in \R_{\geq 0},\, \sum_{\ul{a} \in A} c_{\ul{a}} = 1 \bigg\} \]
 and its set of \emph{extremal points} $\ex(\cH(A))$ (or ``corners'' of the boundary of $\mathcal{H}(A)$) as the set of $\ul{v} \in \cH(A)$ for which there exists a hyperplane in $\R^d$ tangent to $\cH(A)$ at $\ul{v}$, such that $\cH(A)\setminus\{\ul{v}\}$ lies entirely on one side of the hyperplane. Explicitly,
 \[ \ex(\cH(A)) := \bigg\{\ul{v}\in \cH(A) ~\bigg|~ \begin{array}{@{}c@{}} \exists \ul{\mathfrak{n}} = \ul{\mathfrak{n}}(\ul{v})\in \Z^{d},\, \exists c\in\R \text{ such that}\\ \langle \ul{\mathfrak{n}},\ul{v}\rangle = c \text{ and } \langle \ul{\mathfrak{n}},\ul{x}\rangle > c,\,\forall \ul{x}\in \cH(A)\setminus\{\ul{v}\} \end{array}\bigg\}. \]
 
 \FloatBarrier
 \tdplotsetmaincoords{110}{-20}
 \begin{figure}[!htb]
 \centering
 \begin{tikzpicture}[tdplot_main_coords, scale=0.8, every node/.style={scale=0.8}]%,line join=round]
  %help lines
  \foreach \x in {0,...,5}
   \foreach \y in {0,...,5}
    \draw[black!20] (\x,\y,0) -- (\x,\y,5) (0,\x,\y) -- (5,\x,\y) (\x,0,\y) -- (\x,5,\y);
  
  \draw[thick,->] (0,0,0) -- (5,0,0) node[anchor=north east]{$x$};
  \draw[thick,->] (0,0,0) -- (0,5,0) node[anchor=north west]{$y$};
  \draw[thick,->] (0,0,0) -- (0,0,5) node[anchor=south]{$z$};
  
  \path (0,4,4) coordinate (A) %left-most
  (4,4,2) coordinate (B) %front-most
  (5,0,5) coordinate (C) %right-most
  (2,0,5) coordinate (D) %up-most
  (0,0,0) coordinate (O);
  
  %faces de traz
  \draw[green!50!black, fill=green!50!black!50, opacity=.8] (O) -- (A) -- (D) -- cycle;
  \draw[green!50!black, fill=green!50!black!50, opacity=.8] (O) -- (D) -- (C) -- cycle;
  \draw[green!50!black, fill=green!50!black!50, opacity=.8] (O) -- (C) -- (B) -- cycle;
  
  %pontos internos
  \path (2,1,3) coordinate (P)
  (3,3,2) coordinate (Q);
  \draw[-Stealth,dashed,thick,color=green!50!black] (0,0,0) -- (P);
  \node[tdplot_main_coords, color=green!50!black] at (P){$\circ$};
  \draw[-Stealth,dashed,thick,color=green!50!black] (0,0,0) -- (Q);
  \node[tdplot_main_coords, color=green!50!black] at (Q){$\circ$};
  
  \draw[green!50!black, fill=green!50!black!60, opacity=.5] (O) -- (B) -- (A) -- cycle;

  \draw[-Stealth,thick,color=green!50!black] (0,0,0) -- (A);
  \node[tdplot_main_coords, color=green!50!black] at (A){$\bullet$};
  \draw[-Stealth,thick,color=green!50!black] (0,0,0) -- (B);
  \node[tdplot_main_coords, color=green!50!black] at (B){$\bullet$};
  \draw[-Stealth,thick,dashed,color=green!50!black] (0,0,0) -- (C);
  \node[tdplot_main_coords, color=green!50!black] at (C){$\bullet$};
  \draw[-Stealth,thick,dashed,color=green!50!black] (0,0,0) -- (D);
  \node[tdplot_main_coords, color=green!50!black] at (D){$\bullet$};
 \end{tikzpicture}
 \caption{The set $A$ $=$ $\{\mathbf{(0,0,0)}$, $\mathbf{(0,4,4)}$, $\mathbf{(2,0,5)}$, $(2,1,3)$, $(3,3,2)$, $\mathbf{(4,4,2)}$, $\mathbf{(5,0,5)}\}$. Points written in bold compose $\mathrm{ex}(\cH(A))$.}
 \end{figure} 
 
 Analogous to the situation in $\Z$, suppose without loss of generality $\ul{0}\in \ex(\cH(A))$, and for every integer $h \geq 1$, define the \emph{representation function} of $hA$ as
 \begin{align*}
  r_{A,h}(\ul{p}) :=&\, \#\bigg\{(k_a)_{a\in A} \in \Z_{\geq 0}^{|A|} ~\bigg|~ \sum_{\ul{a}\in A} k_{\ul{a}} \ul{a} = \ul{p},\text{ with } \sum_{\ul{a}\in A} k_{\ul{a}} = h \bigg\} \\
  =&\, \#\bigg\{(k_a)_{a\in A\setminus\{0\}} \in \Z_{\geq 0}^{|A|-1} ~\bigg|~ \sum_{\ul{a}\in A\setminus\{\ul{0}\}} k_{\ul{a}} \ul{a} = \ul{p},\text{ with } \sum_{\ul{a}\in A\setminus\{\ul{0}\}} k_{\ul{a}} \leq h \bigg\},
 \end{align*}
 For every integer $t\geq 1$, define
 \[ (hA)^{(t)} := \{\ul{p} \in \Z^{d} ~|~ r_{A,h}(\ul{p}) \geq t\}. \]
 Writing $\mathcal{C}_A := \big\{\sum_{\ul{a}\in A} c_{\ul{a}}\ul{a} ~|~ c_{\ul{a}}\in \R_{\geq 0}\big\}$ for the \emph{cone} of $A$ (note that $\cH(A)\subseteq \mathcal{C}_A$), and $\Lambda_{A} := \mathrm{span}_{\Z}(A) = \big\{\sum_{\ul{a}\in A} c_{\ul{a}}\ul{a} ~|~ c_{\ul{a}}\in \Z\big\}$ for the $\Z$-span of $A$, we can define
 \[ \P_t(A) := \bigcup_{h\geq 1} (hA)^{(t)},\qquad \mathcal{E}_t(A) := (\mathcal{C}_A\cap \Lambda_A)\setminus \P_t(A). \]
 We will prove that there is a smallest $h_t = h_t(A)$ such that
 \begin{equation}
  (hA)^{(t)} = (h \cH(A) \cap \Lambda_{A}) \setminus \bigg( \bigcup_{\ul{v}\,\in\,\ex(\cH(A))} (h\ul{v} - \mathcal{E}_t(\ul{v}-A)) \bigg) \label{strzd}
 \end{equation}
 for every $h\geq h_t$, where $h\mathcal{H}(A)= \{h\ul{p} ~|~ \ul{p}\in \mathcal{H}(A)\}$ is the $h$-dilate of $\mathcal{H}(A)$. In 2020, Granville--Shakan \cite{grasha20} proved the finiteness of $h_1$, and in 2023 Granville--Shakan--Walker \cite{grashawal23} obtained the explicit bound
 \[ h_1(A) \leq (d+1)2^{11d^2} d^{12d^{6}}|A|^{3d^2} \Big(\max_{\ul{a},\ul{b}\in A}\norm{\ul{a}-\ul{b}}_{\infty}\Big)^{8d^{6}} \leq \Big(d\ell\,\max_{\ul{a},\ul{b}\in A}\norm{\ul{a}-\ul{b}}_{\infty}\Big)^{13d^6}. \]
 
 %%%%%%%%%%% NORMAL DIRECTION OF A %%%%%%%%%%%%%%%%
 Our result gives a bound for $h_t$ depending on certain quantities associated to $A$. Let $\ul{\mathfrak{n}}$ be a unit vector in $\R^d$ for which $\langle \ul{\mathfrak{n}}, \ul{v}\rangle > 0$ for every $\ul{v} \in \mathcal{C}_A \setminus\{\ul{0}\}$, so that $\mathcal{C}_A \setminus\{\ul{0}\}$ lies entirely on one side of the hyperplane normal to $\ul{\mathfrak{n}}$. Define
 \begin{equation}
  \delta_{A} = \delta_{A,\ul{\mathfrak{n}}} := \min_{\ul{a}\in A\setminus\{\ul{0}\}} \langle\ul{a},\ul{\mathfrak{n}} \rangle,\qquad \Delta_{A} = \Delta_{A,\ul{\mathfrak{n}}} := \max_{\ul{a}\in A\setminus\{\ul{0}\}} \langle\ul{a},\ul{\mathfrak{n}} \rangle. \label{defd}
 \end{equation}
 That is, we project the elements of $A$ onto the $1$-dimensional cone $\R_{\geq 0}\,\ul{\mathfrak{n}}$, and take $\delta_{A}$ (resp., $\Delta_{A}$) to be the length of the \emph{smallest} (resp. \emph{largest}) \emph{non-zero projection}. These constants are used in the estimate of the structure theorem.

 \begin{lem}\label{mainlm} %Main lemma %First point where it starts becoming periodic
  There exists a minimum  $\varphi = \varphi_{A,t}\in \R_{\geq 1}$ such that, for every real $\lambda \geq \varphi$, we have
  \[ \big((\lambda\cH(A) \cap \Lambda_A)\setminus \mathcal{E}_t(A)\big) + A = \big((\lambda+1) \cH(A) \cap \Lambda_A\big)\setminus \mathcal{E}_t(A). \]
 \end{lem}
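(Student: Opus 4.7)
Write $B_\mu := (\mu\cH(A)\cap\Lambda_A)\setminus\mathcal{E}_t(A)$, so the lemma asserts $B_\lambda+A=B_{\lambda+1}$ for $\lambda\geq \varphi$. The plan is to establish the two inclusions separately.

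The inclusion $B_\lambda+A\subseteq B_{\lambda+1}$ is straightforward. For $q\in B_\lambda$ and $a\in A\subseteq \cH(A)$, the identity $\lambda\cH(A)+\cH(A)=(\lambda+1)\cH(A)$ for convex sets gives $q+a\in(\lambda+1)\cH(A)$, and $q+a\in \Lambda_A$ trivially. Since $q\in \mathcal{C}_A\cap\Lambda_A$ but $q\notin\mathcal{E}_t(A)$, we have $q\in\mathcal{P}_t(A)$, so $\rho_{A,h}(q)\geq t$ for some $h$; appending $a$ to each of the $t$ distinct representations of $q$ gives $\rho_{A,h+1}(q+a)\geq t$, hence $q+a\in \mathcal{P}_t(A)$ and $q+a\notin\mathcal{E}_t(A)$.

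For the reverse inclusion $B_{\lambda+1}\subseteq B_\lambda+A$, fix $p\in B_{\lambda+1}$. If $p\in\lambda\cH(A)$, use the trivial decomposition $p=p+\ul{0}$ with $\ul{0}\in A$. Otherwise $p$ lies in the outer shell $(\lambda+1)\cH(A)\setminus\lambda\cH(A)$, and I seek a non-zero extremal vertex $v$ of $\cH(A)$ to subtract. Write $p=\sum_{v\in\ex(\cH(A))} c_v\, v$ with $c_v\geq 0$ and $\sum_v c_v=\lambda+1$. I claim $c_{\ul{0}}<1$ in every such representation: otherwise the modified coefficients $(c_v)_{v\neq \ul{0}}$ together with $c_{\ul{0}}-1\geq 0$ would sum to $\lambda$, placing $p$ in $\lambda\cH(A)$, a contradiction. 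Hence $\sum_{v\neq\ul{0}}c_v>\lambda$, and pigeonhole over the $|\ex(\cH(A))|-1$ non-zero vertices yields some $v\neq\ul{0}$ with $c_v\geq \lambda/(|\ex(\cH(A))|-1)\geq 1$ provided $\lambda\geq |\ex(\cH(A))|-1$. For such $v$, one has $p-v=\sum_{v'\neq v}c_{v'}v'+(c_v-1)v\in \lambda\cH(A)\cap\Lambda_A$, and $v\in\ex(\cH(A))\subseteq A$.

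The main obstacle, and the source of the dependence of $\varphi$ on $A$ and $t$, is arranging simultaneously that $p-v\in\mathcal{P}_t(A)$. The idea is to coordinate the geometric choice of $v$ with the $t$-representation structure of $p$: given representations $p=\sum_i k_i^{(j)}a_i$ $(j=1,\ldots,t)$ with $\sum_i k_i^{(j)}\leq h$, if $v$ satisfies $k_v^{(j)}\geq 1$ for every $j$, then decrementing the $v$-coordinate of each tuple produces $t$ distinct representations of $p-v$ with $\sum\leq h-1$, giving $p-v\in\mathcal{P}_t(A)$. To secure such a $v$ that is also geometrically admissible ($c_v\geq 1$), one takes $\lambda$ large enough that $\mathcal{E}_t(A)\cap\lambda\cH(A)$ is confined to narrow tubes along the facets of $\mathcal{C}_A$ adjacent to $\ul{0}$: if $p$ is deep inside $\mathcal{C}_A$ (far from every such facet), any admissible $v$ works; if $p$ lies close to a facet $F$, one chooses $v\in F$ (extremal vertices on every facet of $\mathcal{C}_A$ exist by standard dimension arguments), which preserves the distance from $p-v$ to $F$ and so keeps $p-v$ outside $\mathcal{E}_t(A)$. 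The constant $\varphi=\varphi_{A,t}$ is then set to be the maximum of $|\ex(\cH(A))|-1$ (the geometric threshold) and the depth constant coming from this structural control on $\mathcal{E}_t(A)$; the latter is the delicate step and the one I expect to require most of the technical work.
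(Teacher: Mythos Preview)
Your easy inclusion is fine and matches the paper. The gap is in the reverse inclusion, where the heuristic you defer as ``the delicate step'' is in fact the entire content of the lemma, and your outline does not give a workable route to it.

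Two concrete problems. First, the assertion that $\mathcal{E}_t(A)\cap\lambda\cH(A)$ is ``confined to narrow tubes along the facets of $\mathcal{C}_A$'' is not something you can arrange by choosing $\lambda$ large; it is a structural fact about $\mathcal{E}_t(A)$ that must be \emph{proved}, and proving it is essentially equivalent to the lemma itself. Second, even granting the tube picture, your rule ``if $p$ is close to a facet $F$, subtract a vertex $v\in F$'' collapses when $p$ is close to several facets at once, i.e.\ near a lower-dimensional face $E=F_1\cap\cdots\cap F_r$. You would then need $v\in E$, but there is no reason such a $v$ exists with $c_v\geq 1$, and subtracting $v\in F_1\setminus E$ may push $p-v$ into the tube around $F_2$. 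Handling these corner regions forces a recursion on the dimension of the face, which your direct pigeonhole argument does not provide. There is also a smaller slip: you cannot in general find a single vertex $v$ with $k_v^{(j)}\geq 1$ for all $t$ representations simultaneously, so the ``decrement $v$ in every representation'' idea does not by itself certify $p-v\in\mathcal{P}_t(A)$.

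The paper's argument is organized quite differently. It first reduces via Carath\'eodory to simplices $\cH(B)$ with $B\setminus\{\ul 0\}$ linearly independent, then proves the analogue of your reverse inclusion for each simplex by \emph{induction on $|B|$}. The inductive step (Lemma~\ref{mldd}) chooses a single deep point $\ul P\in\mathcal{C}_B$ so that $(\ul P+\mathcal{C}_B)\cap\Lambda_A\subseteq\mathcal{P}_t(A)$; on this translated cone the identity is immediate because adding the vertices of $B$ already reproduces the whole lattice. The leftover region $(\mathcal{C}_B\cap\Lambda_A)\setminus(\ul P+\mathcal{C}_B)$ is then shown (Lemma~\ref{decomp}) to be a finite union of translated cones over \emph{proper} subsets $B'\subsetneq B$, to which the induction hypothesis applies. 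This induction is exactly what resolves the ``near a low-dimensional face'' difficulty that your sketch leaves open.
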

 
 \begin{thm}[$t$-structure]\label{ZdFrobt}
  The set $(hA)^{(t)}\subseteq \Z^d$ is structured as in \eqref{strzd} for every
  \[ h \geq \max_{\substack{\ul{0}\,\in\,B\, \subseteq\, \mathrm{ex}(\cH(A)) \\ |B|\,=\,d+1 \\ \mathrm{span}_{\R}(B) \,=\, \R^d}} \bigg(\sum_{\ul{b}\in B} \left\lceil\frac{\Delta_{\ul{b}-A}}{\delta_{\ul{b}-A}}\, \varphi_{\ul{b}-A,t}\right\rceil \bigg), \]
  where $\varphi_{\ul{b}-A,t}$ is the constant from Lemma \ref{mainlm} for the set $\ul{b}-A$.
 \end{thm}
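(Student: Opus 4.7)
The plan is to prove the two inclusions of \eqref{strzd} separately. The inclusion $(hA)^{(t)}\subseteq$ RHS follows from the standard duality: $t$ distinct representations $\ul{p}=\sum_{\ul{a}\in A}k_\ul{a}^{(j)}\ul{a}$ with $\sum_\ul{a} k_\ul{a}^{(j)}=h$ produce, for every $\ul{v}\in\ex(\cH(A))$, the $t$ distinct representations $h\ul{v}-\ul{p}=\sum_{\ul{a}}k_\ul{a}^{(j)}(\ul{v}-\ul{a})$ as sums of $h$ elements of $\ul{v}-A$, so $h\ul{v}-\ul{p}\in\mathcal{P}_t(\ul{v}-A)$ and thus $\ul{p}\notin h\ul{v}-\mathcal{E}_t(\ul{v}-A)$; the membership $\ul{p}\in h\cH(A)\cap\Lambda_A$ is immediate.

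For the reverse inclusion, fix $\ul{p}$ in the RHS. I would cone-triangulate $\cH(A)$ from the vertex $\ul{0}$, expressing $\cH(A)=\bigcup_B\cH(B)$ over simplices $B=\{\ul{0},\ul{b}_1,\ldots,\ul{b}_d\}\subseteq\ex(\cH(A))$ with $\{\ul{b}_1,\ldots,\ul{b}_d\}$ a basis of $\R^d$. Choose $B$ such that $\ul{p}\in h\cH(B)$, giving barycentric coordinates $\ul{p}=\sum_{\ul{b}\in B}\beta_\ul{b}\ul{b}$ with $\beta_\ul{b}\geq 0$ and $\sum_\ul{b}\beta_\ul{b}=h$. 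For each $\ul{b}\in B$, set $\ul{q}_\ul{b}:=h\ul{b}-\ul{p}=\sum_{\ul{b}'\neq\ul{b}}\beta_{\ul{b}'}(\ul{b}-\ul{b}')$, which lies in $(h-\beta_\ul{b})\cH(\ul{b}-A)\cap\Lambda_{\ul{b}-A}$; the hypothesis $\ul{p}\notin h\ul{b}-\mathcal{E}_t(\ul{b}-A)$ forces $\ul{q}_\ul{b}\in\mathcal{P}_t(\ul{b}-A)$. Let $h'_\ul{b}$ be the minimum integer with $\ul{q}_\ul{b}\in(h'_\ul{b}(\ul{b}-A))^{(t)}$. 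Finding $\ul{b}\in B$ with $h'_\ul{b}\leq h$ suffices: padding each of the $t$ representations of $\ul{q}_\ul{b}$ with $h-h'_\ul{b}$ copies of $\ul{0}\in\ul{b}-A$ and using $\ul{p}=h\ul{b}-\ul{q}_\ul{b}$ yields $t$ distinct representations of $\ul{p}$ as a sum of $h$ elements of $A$.

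Abbreviating $\varphi_\ul{b}:=\varphi_{\ul{b}-A,t}$ and $c_\ul{b}:=\lceil(\Delta_{\ul{b}-A}/\delta_{\ul{b}-A})\varphi_\ul{b}\rceil$, I would establish the individual bound $h'_\ul{b}\leq c_\ul{b}+(h-\beta_\ul{b})$ for every $\ul{b}\in B$. When $h-\beta_\ul{b}<\varphi_\ul{b}$, the projection argument along $\ul{\mathfrak{n}}_\ul{b}$ (every non-$\ul{0}$ summand of $\ul{q}_\ul{b}$ in $\ul{b}-A$ has projection at least $\delta_{\ul{b}-A}$) yields $h'_\ul{b}\leq(h-\beta_\ul{b})\Delta_{\ul{b}-A}/\delta_{\ul{b}-A}<c_\ul{b}$. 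When $h-\beta_\ul{b}\geq\varphi_\ul{b}$, set $S_\lambda:=(\lambda\cH(\ul{b}-A)\cap\Lambda_{\ul{b}-A})\setminus\mathcal{E}_t(\ul{b}-A)$: the base case $S_{\varphi_\ul{b}}\subseteq(c_\ul{b}(\ul{b}-A))^{(t)}$ follows again from projection plus padding with $\ul{0}$, and iterating Lemma \ref{mainlm} inductively gives $S_\lambda\subseteq((c_\ul{b}+\lceil\lambda-\varphi_\ul{b}\rceil)(\ul{b}-A))^{(t)}$ for all $\lambda\geq\varphi_\ul{b}$ (the inductive step uses that Minkowski-adding $\ul{b}-A$ to a $t$-fold representation as a sum of $c+k$ elements yields a $t$-fold representation as a sum of $c+k+1$ elements). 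Applied at $\lambda=h-\beta_\ul{b}$, this gives $h'_\ul{b}\leq c_\ul{b}+\lceil h-\beta_\ul{b}-\varphi_\ul{b}\rceil\leq c_\ul{b}+(h-\beta_\ul{b})$ (using $\varphi_\ul{b}\geq 1$). Summing over $\ul{b}\in B$ and applying $\sum_\ul{b}\beta_\ul{b}=h$ yields
\[ \sum_{\ul{b}\in B}h'_\ul{b}\leq\sum_{\ul{b}\in B}c_\ul{b}+dh, \]
so by pigeonhole $\min_{\ul{b}\in B}h'_\ul{b}\leq(\sum_\ul{b} c_\ul{b}+dh)/(d+1)$, which is $\leq h$ precisely when $\sum_{\ul{b}\in B}c_\ul{b}\leq h$, a condition guaranteed by the hypothesis since $B$ is admissible.

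The main obstacle is the quantitative lifting step above: Lemma \ref{mainlm} is crucially used as an \emph{additive} recurrence that propagates the initial projection bound $c_\ul{b}$ one unit per scale increment, contrasting with the naive projection bound alone, which would produce the much worse \emph{multiplicative} blow-up $\lambda\Delta_{\ul{b}-A}/\delta_{\ul{b}-A}$ and fail to close the pigeonhole. Coupling this additive control with the barycentric decomposition from the cone triangulation is precisely what saturates the bound at the threshold $h\geq\sum_\ul{b}c_\ul{b}$; the maximum over admissible simplices $B$ in the statement arises because the specific simplex used depends on which region of $\cH(A)$ contains $\ul{p}/h$.
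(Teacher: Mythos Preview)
Your proof is correct and follows essentially the same route as the paper: both combine Lemma~\ref{mainlm} iterated (your $S_{\varphi_{\ul{b}}+k}\subseteq((c_{\ul{b}}+k)(\ul{b}-A))^{(t)}$ is exactly the paper's Lemma~\ref{sttpnt}) with the projection bound of Lemma~\ref{rhostab} and Carath\'eodory's theorem (Lemma~\ref{caratlm}). The only difference is organizational: the paper's Lemma~\ref{eachB} locates the good vertex by an explicit case split (some $\beta_{\ul{b}_i}\geq C_{\ul{b}_i}$ versus all $\beta_{\ul{b}_i}<C_{\ul{b}_i}$, the latter forcing $\beta_{\ul{0}}$ large), whereas you find it by the symmetric pigeonhole on $\sum_{\ul{b}}h'_{\ul{b}}\leq\sum_{\ul{b}}c_{\ul{b}}+dh$, and both yield the identical threshold $h\geq\sum_{\ul{b}\in B}c_{\ul{b}}$.
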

 
 \begin{xrem}
  Note that the bound in Theorem \ref{ZdFrobt} depends on the choice of $\ul{\mathbf{n}}$ in \eqref{defd}. The bound is valid for all valid choices of $\ul{\mathfrak{n}}$, so one could, in principle, find the $\ul{\mathfrak{n}}$ that yields the minimum $\Delta_A/\delta_A$.
 \end{xrem}
 
 Lemma \ref{mainlm} and Theorem \ref{ZdFrobt} are proven in Section \ref{defdelta}. We note that Theorem \ref{ZdFrobt} can be seen as a generalization of Theorem \ref{MT1} for $d\geq 2$ -- see Remark \ref{remana}.

%%%%
\subsection{Size of \texorpdfstring{$(hA)^{(t)}$}{(hA)\^{}(t)}}
 In Section \ref{defdelta}, we will also prove the following: 
 
 \begin{thm}[$t$-Khovanskii]\label{tKhov}
  If $A\subseteq \Z^{d}$ is finite, then for every $t\geq 1$ there is $h^{\mathrm{Kh}}_t(A) \in \Z_{\geq 0}$ such that, for every $h\geq h^{\mathrm{Kh}}_t(A)$, we have
  \[ |(hA)^{(t)}| = p_{A,t}(h), \] 
  where $p_{A,t}(x)\in \Q[x]$ is a polynomial of degree $\leq d$.
 \end{thm}

 The case $t=1$ is Khovanskii's theorem (cf. \cite{curgol21, grashawal23} for effective bounds to $h^{\mathrm{Kh}}_{1}(A)$).
% \begin{ntt}
%  If $f$, $g$ be real functions defined for large $x\in \R_{>0}$, write $f=O(g)$ if there is $M\in \R_{>0}$ such that $|f| \leq M|g|$, and $f = o_{x\to\infty}(g)$ if $\lim_{x\to \infty} |f(x)|/|g(x)| = 0$.
%  All intervals in this paper denote intervals of integers; i.e., ``$\,\cdot\cap \Z$''.
% \end{ntt}

%%%%%%%%%%%%%%%%%%%%%%%%%%%%%%%%%%%%%%%%%%%%%%%%%%%%%%%%%%%%%%%
\section{Estimating \texorpdfstring{$\Fr_t(A)$}{Frob\_t(A)}}\label{sec2}
 Let $m$, $\ell$, $t$, and $A$ be as in Subsection \ref{ssec12}. Write $r_A(n) := \lim_{h\to\infty} r_{A,h}(n)$ for the \emph{total representation function} of $A$, so that $\P_t(A) = \{n\in \Z_{\geq 0} ~|~ r_A(n) \geq t\}$.
 
 \begin{prop}\label{lbrho}
  For every $n\geq 0$, we have
  \[ r_A(n) \,\leq\, \frac{1}{\ell!} \frac{(n + a_1\sum_{j=2}^{\ell+1} a_{j})^{\ell}}{a_1 \cdots a_{\ell}m}. \]
  Moreover, if $n\geq (a_1-1)\sum_{j=2}^{\ell+1} a_j$, then
  \[ r_A(n) \,\geq\, \frac{1}{\ell!} \frac{(n-(a_1-1)\sum_{j=2}^{\ell+1} a_j)^{\ell}}{a_1\cdots a_{\ell}m}  \]
 \end{prop}
 
 To prove this, we will need the following standard lattice point counting lemma, to which we include a short proof for the sake of completeness.
 
 \begin{lem}\label{clp}
  Let $d\geq 2$ be an integer, and $N_1$, $N_2$, $\ldots$, $N_d \in \Z_{\geq 0}$. For $R\in \R_{\geq 0}$, define the $d$-simplex
  \[ \varDelta_R(N_1,\ldots, N_d) := \{(x_1,\ldots,x_{d}) \in (\R_{\geq 0})^{d} ~|~ x_1N_1 + x_2N_2 + \cdots + x_{d}N_{d} \leq R \}. \]
  Then,
  \begin{equation}
   \text{\small$\mathrm{vol}_{\R^d}\, \varDelta_R(N_1,\ldots, N_d) \leq \big|\varDelta_R(N_1,\ldots, N_d) \cap \Z^d\big| \leq \mathrm{vol}_{\R^d}\, \varDelta_{R +\sum_{i=1}^{d} N_i}(N_1,\ldots, N_d),$} \label{vol2s}
  \end{equation}
  and
  \begin{equation}
   \mathrm{vol}_{\R^d}\, \varDelta_R(N_1,\ldots, N_d) = \frac{1}{d!} \frac{R^d}{N_1\cdots N_d}. \label{Rd}
  \end{equation}
 \end{lem}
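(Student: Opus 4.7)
My plan is to prove the two assertions of Lemma~\ref{clp} separately, starting with the volume formula \eqref{Rd}, which is entirely standard, and then deriving the sandwich \eqref{vol2s} by the classical ``assign a unit cube to each lattice point'' argument.

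For \eqref{Rd}, I would apply the change of variables $y_i = x_i N_i / R$ for $i=1,\ldots,d$, which sends $\varDelta_R(N_1,\ldots,N_d)$ bijectively onto the standard simplex $\{\ul{y}\in \R_{\geq 0}^d \mid y_1+\ldots+y_d\leq 1\}$ of volume $1/d!$, with Jacobian $\prod_i (R/N_i)^{-1}$; multiplying gives $R^d/(d!\,N_1\cdots N_d)$. (In fact this is only used when $N_1,\ldots,N_d\geq 1$, otherwise the simplex is unbounded; in the intended application to Proposition~\ref{lbrho}, all the $N_i$ will be positive.)

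For the lower bound in \eqref{vol2s}, I associate to each point $\ul{x} = (x_1,\ldots,x_d)\in \varDelta_R(N_1,\ldots,N_d)$ its floor $\ul{k}(\ul{x}) := (\lfloor x_1\rfloor,\ldots,\lfloor x_d\rfloor)$. Since $\lfloor x_i\rfloor N_i \leq x_i N_i$, we have $\ul{k}(\ul{x})\in \varDelta_R(N_1,\ldots,N_d)\cap \Z^d$, and $\ul{x} \in \ul{k}(\ul{x}) + [0,1)^d$. Hence the disjoint union $\bigsqcup_{\ul{k}\in \varDelta_R\cap \Z^d} (\ul{k} + [0,1)^d)$ covers $\varDelta_R(N_1,\ldots,N_d)$, and comparing volumes yields $\mathrm{vol}_{\R^d}\, \varDelta_R(N_1,\ldots,N_d) \leq |\varDelta_R(N_1,\ldots,N_d)\cap \Z^d|$.

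For the upper bound, I go in the opposite direction: to each lattice point $\ul{k}\in \varDelta_R(N_1,\ldots,N_d)\cap \Z^d$ I attach the half-open cube $\ul{k}+[0,1)^d$. For $\ul{t}\in [0,1)^d$ and such a $\ul{k}$, one has $\sum_i (k_i+t_i)N_i \leq R + \sum_i N_i$ and $k_i+t_i\geq 0$, so $\ul{k}+[0,1)^d \subseteq \varDelta_{R+\sum_i N_i}(N_1,\ldots,N_d)$. Since these cubes are pairwise disjoint, comparing volumes gives $|\varDelta_R(N_1,\ldots,N_d)\cap \Z^d| \leq \mathrm{vol}_{\R^d}\, \varDelta_{R+\sum_i N_i}(N_1,\ldots,N_d)$, which completes \eqref{vol2s}. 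There is no real obstacle here; the only thing to be careful about is to use half-open cubes so that the ``one cube per lattice point'' assignment is a genuine partition (or at least disjoint), making both inequalities follow cleanly from additivity of Lebesgue measure.
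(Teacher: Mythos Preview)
Your proposal is correct and follows essentially the same route as the paper: attach a unit cube to each lattice point of $\varDelta_R$, use that the union covers $\varDelta_R$ for the lower bound and is contained in $\varDelta_{R+\sum N_i}$ for the upper bound, and compute the volume of $\varDelta_R$ by scaling to the standard simplex. Your use of half-open cubes and the explicit floor map is slightly cleaner than the paper's closed-cube version (which relies on the overlaps having measure zero), and your remark that the formula only makes sense for $N_i\geq 1$ is a sensible caveat the paper omits.
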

 \begin{proof}
  To each point $\underline{p} = (p_1,\ldots, p_d) \in \varDelta_R(N_1,\ldots, N_d) \cap \Z^d$ consider the unit hypercube
  \[ H(\underline{p}) := \{(p_1 + y_1, \ldots, p_d + y_d) ~|~ 0\leq y_1,\ldots,y_d \leq 1 \} = \underline{p} + H(\ul{0}). \]
  The union $\bigcup_{\underline{p}\in \varDelta_R(N_1,\ldots, N_d) \,\cap\, \Z^d} H(\underline{p})$ covers $\varDelta_R(N_1,\ldots, N_d)$. Therefore
  \begin{align*}
   |\varDelta_R(N_1,\ldots, N_d) \,\cap\, \Z^d| &= \mathrm{vol}_{\R^{d}} \bigg(\bigcup_{\underline{p}\in \varDelta_R(N_1,\ldots, N_d) \,\cap\, \Z^d} H(\underline{p})\bigg) \\
   &\geq \mathrm{vol}_{\R^d}\, \varDelta_R(N_1,\ldots, N_d).
  \end{align*}
  Moreover,
  \begin{align*}
   &\mathrm{vol}_{\R^{d}} \bigg(\bigcup_{\underline{p}\in \varDelta_R(N_1,\ldots, N_d) \,\cap\, \Z^d} H(\underline{p})\bigg) \\
   &\leq \mathrm{vol}_{\R^{d}} \{(x_1,\ldots,x_{d}) \in (\R_{\geq 0})^{d} ~|~ (x_1-1)N_1 + (x_2-1)N_2 + \cdots + (x_{d}-1)N_{d} \leq R \} \\
   &= \mathrm{vol}_{\R^{d}} \Bigg\{(x_1,\ldots,x_{d}) \in (\R_{\geq 0})^{d} ~\bigg|~ x_1 N_1 + x_2 N_2 + \cdots + x_{d} N_{d} \leq R + \sum_{i=1}^{d} N_i \Bigg\} \\
   &= \mathrm{vol}_{\R^{d}} \Delta_{R+\sum_{i=1}^{d} N_i}(N_1,\ldots,N_d),
  \end{align*}
  thus proving \eqref{vol2s}. For \eqref{Rd}, we have
  \begin{align*}
   \mathrm{vol}_{\R^d}\, \varDelta_R(N_1,\ldots, N_d) &= \int_{\substack{x_1N_1 + \ldots + x_{d}N_d \,\leq\, R \\ x_1,\,\ldots,\,x_{d} \,\geq\, 0}} \mathrm{d}x_1\cdots\mathrm{d}x_{d} \\
   &= \frac{R^{d}}{N_1\cdots N_d} \int_{\substack{x_1 + \ldots + x_{d} \,\leq\, 1 \\ x_1,\,\ldots,\,x_{d} \,\geq\, 0}} \mathrm{d}x_1\cdots\mathrm{d}x_{d} = \frac{1}{d!}\frac{R^{d}}{N_1\cdots N_d}. \qedhere
  \end{align*}
 \end{proof}
 
 \begin{lem}\label{Snn}
  Let $A = \{0 = a_0 < a_1 < \cdots < a_{\ell} < a_{\ell+1} =: m\}\subseteq \Z$ be a finite set of integers with $\gcd(A)=1$. For $n\geq 0$, let 
  \begin{equation*}
   S(n) := \bigg\{(\mu_2,\ldots, \mu_{\ell+1}) \in \{0,\ldots,a_1-1\}^{\ell} ~\bigg|~ \sum_{j=2}^{\ell+1} a_j \mu_j \equiv n \pmod{a_1}\bigg\}.
  \end{equation*}
  Then $|S(n)| = a_1^{\ell-1}$.
 \end{lem}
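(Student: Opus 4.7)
The plan is to reinterpret $S(n)$ as the fiber of a group homomorphism between finite abelian groups, at which point the counting reduces to surjectivity.

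First, I would identify $\{0,\ldots,a_1-1\}^{\ell}$ with $(\Z/a_1\Z)^{\ell}$ via reduction mod $a_1$ (which is a bijection on representatives), and consider the map
\[ \phi: (\Z/a_1\Z)^{\ell} \longrightarrow \Z/a_1\Z, \qquad (\mu_2,\ldots,\mu_{\ell+1}) \longmapsto \sum_{j=2}^{\ell+1} a_j \mu_j \pmod{a_1}. \]
This is a homomorphism of abelian groups, and $S(n)$ is precisely the fiber $\phi^{-1}(n \bmod a_1)$. By the first isomorphism theorem all fibers have the same size, namely $a_1^{\ell}/|\mathrm{Im}(\phi)|$, so the claim $|S(n)| = a_1^{\ell-1}$ amounts to the statement that $\phi$ is surjective.

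The main (and essentially only) step is thus to verify surjectivity, which is where the hypothesis $\gcd(A)=1$ enters. The image of $\phi$ is the subgroup of $\Z/a_1\Z$ generated by the residues $a_2,\ldots,a_{\ell+1} \pmod{a_1}$, which equals the cyclic subgroup generated by $g := \gcd(a_2,\ldots,a_{\ell+1}) \pmod{a_1}$. This subgroup equals all of $\Z/a_1\Z$ if and only if $\gcd(g,a_1) = 1$; but $\gcd(g,a_1) = \gcd(a_1,a_2,\ldots,a_{\ell+1}) = \gcd(A) = 1$ by hypothesis. Hence $\phi$ is surjective and $|S(n)| = a_1^{\ell}/a_1 = a_1^{\ell-1}$, independently of $n$. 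No genuine obstacle is expected; the only thing to be careful about is correctly translating $\gcd(A)=1$ into the surjectivity statement.
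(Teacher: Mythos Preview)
Your proof is correct and is essentially the same argument as the paper's, just phrased in group-theoretic language: the paper constructs by hand the bijection $S(b)\to S(b+g)$ (i.e., fibers are translates of each other) and uses $\gcd(a_1,g)=1$ to hit every residue class (i.e., surjectivity), then divides $a_1^\ell$ by $a_1$. Your packaging via the first isomorphism theorem is cleaner but the content is identical.
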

 \begin{proof}
  If $0\leq b<a_1$ is such that $n\equiv b \pmod{a_1}$ then $|S(n)|=|S(b)|$. Let $g:= \gcd(a_2,\ldots, a_{\ell},m)$ and $x_2,\ldots, x_{\ell+1}\in \Z$ be such that $\sum_{j=2}^{\ell+1} a_jx_j = g$. For each $(\mu_2,\ldots, \mu_{\ell+1}) \in S(b)$ we have $(\mu'_2,\ldots,\mu'_{\ell+1})\in S(b+g)$, where $0\leq \mu'_j < a_1$ is such that $\mu'_j \equiv \mu_j+x_j\pmod{a_1}$. Hence, $|S(b)| = |S(b+g)|$.
  
  By the definition of $g$, $\gcd(a_1,g) = 1$, so $\{b,b+g,\ldots,b+(a_1-1)g\}$ is a complete set of representatives of residues modulo $a_1$. Extending the argument above, we get that $|S(b)| = |S(b+g)| = \cdots = |S(b+(a_1-1)g)|$. Therefore, as $\sum_{b=0}^{a_1-1} |S(b)| = \#\{(\mu_2,\ldots, \mu_{\ell+1}) \in \{0,\ldots,a_1-1\}^{\ell}\} = a_1^{\ell}$, $|S(n)| = a_1^{\ell-1}$ for every $n\geq 0$.
 \end{proof}

 \begin{proof}[Proof of Proposition \ref{lbrho}]
  For each representation $k_1a_1 + \cdots +k_{\ell}a_{\ell} + k_{\ell+1}m = n$, we can write $k_j = \mu_j +a_1 q_j$ for some unique $\mu_{j}\in \{0,\ldots, a_1-1\}$ and $q_j\geq 0$, so that
  \[ k_1 + \sum_{j=2}^{\ell+1} a_j q_j = \frac{n - \sum_{j=2}^{\ell+1} a_j \mu_j}{a_1}. \]
  Thus,
  \begin{align*}
   r_A(n) &= \#\big\{(k_1,\ldots,k_{\ell+1}) \in (\Z_{\geq 0})^{\ell+1} ~\big|~ k_1a_1 + \cdots +k_{\ell}a_{\ell} + k_{\ell+1}m = n\big\} \\
   &= \sum_{(\mu_2,\ldots,\mu_{\ell+1})\in S(n)} r_{\{1,a_2,\ldots,a_{\ell}, m\}}\bigg( \frac{n - \sum_{j=2}^{\ell+1} a_j \mu_j}{a_1} \bigg),
  \end{align*}
  where $S(n)$ is as in Lemma \ref{Snn}. For each $N\geq 0$, in the notation of Lemma \ref{clp}, we have
  \begin{align*}
   r_{\{1,a_2,\ldots,a_{\ell},m\}}(N) &= \#\big\{(q_2,\ldots,q_{\ell+1}) \in (\Z_{\geq 0})^{\ell} ~\big|~ q_2a_2 +\cdots +q_{\ell}a_{\ell} + q_{\ell+1}m \leq N \big\} \\
   &= |\Delta_{N}(a_2,\ldots,a_{\ell+1}) \cap \Z^{\ell}|,
  \end{align*}
  so, by Lemma \ref{clp},
  \[ \frac{1}{\ell!} \frac{N^{\ell}}{a_2\cdots a_{\ell}m} \,\leq\, r_{\{1,a_2,\ldots,a_{\ell},m\}}(N) \,\leq\, \frac{1}{\ell!} \frac{(N+ \sum_{j=2}^{\ell+1} a_{j})^{\ell}}{a_2\cdots a_{\ell}m}. \]
  Therefore, since $0\leq n - (a_1-1)\sum_{j=2}^{\ell+1} a_j \leq n - \sum_{j=2}^{\ell+1} a_j \mu_j \leq n$, we have
  \begin{equation*}
   |S(n)|\cdot \frac{1}{\ell!} \frac{(n-(a_1-1)\sum_{j=2}^{\ell+1} a_j)^{\ell}}{a_1^{\ell}\, a_2\cdots a_{\ell}m} \,\leq\, r_A(n) \,\leq\, |S(n)|\cdot \frac{1}{\ell!} \frac{(n +  a_1\sum_{j=2}^{\ell+1} a_{j})^{\ell}}{a_1^{\ell}\, a_2\cdots a_{\ell}m}
  \end{equation*}
  so, by Lemma \ref{Snn}:
  \[ \frac{1}{\ell!} \frac{(n-(a_1-1)\sum_{j=2}^{\ell+1} a_j)^{\ell}}{a_1\cdots a_{\ell}m} \,\leq\, r_A(n) \,\leq\, \frac{1}{\ell!} \frac{(n + a_1\sum_{j=2}^{\ell+1} a_{j})^{\ell}}{a_1\cdots a_{\ell}m}. \qedhere \]
 \end{proof}
 
 \begin{rem}
  If $\Delta_A := (2a_1-1)\sum_{j=2}^{\ell+1}a_j$, then Proposition \ref{lbrho} shows, in particular, that $r_A(n + \Delta_A + k) \geq r_A(n)$ for every $k\geq 0$, $n \geq (a_1-1)\sum_{j=2}^{\ell+1}a_j$. In fact, if $N= n+\Delta_A + k$, then
  \begin{align*}
   r_A(N) &\geq \frac{1}{\ell!} \frac{(n + a_1\sum_{j=2}^{\ell+1} a_{j} + k)^{\ell}}{a_1 \cdots a_{\ell}m} \\
   &\geq \bigg(1+\frac{k\ell}{n + a_{1}\sum_{j=2}^{\ell+1} a_{j}}\bigg)\frac{1}{\ell!} \frac{(n + a_1\sum_{j=2}^{\ell+1} a_{j})^{\ell}}{a_1 \cdots a_{\ell}m} \\
   &\geq \bigg(1+\frac{k\ell}{n + \Delta_A}\bigg) r_A(n).
  \end{align*}
 \end{rem}
 
 \begin{cor}[Estimates for $\Fr_t(A)$]\label{lbrhoC}
  If 
  \[ n> (a_1\cdots a_{\ell}m)^{1/\ell}\, (\ell!)^{1/\ell} (t-1)^{1/\ell} + (a_1-1)\sum_{j=2}^{\ell+1} a_j, \]
  then $r_A(n) \geq t$, and if
  \[ n\leq (a_1\cdots a_{\ell}m)^{1/\ell}\, (\ell!)^{1/\ell} (t-1)^{1/\ell} - a_1\sum_{j=2}^{\ell+1} a_j - 1, \]
  then $r_A(n) < t$. In particular:
  \begin{itemize}
   \item $\Fr_t(A) \leq (a_1\cdots a_{\ell}m)^{1/\ell}\, (\ell!)^{1/\ell} (t-1)^{1/\ell} + (a_1-1)\sum_{j=2}^{\ell+1} a_j$;\smallskip
   
   \item $\Fr_t(A) > (a_1\cdots a_{\ell}m)^{1/\ell}\, (\ell!)^{1/\ell} (t-1)^{1/\ell} - a_1\sum_{j=2}^{\ell+1} a_j - 1$.
  \end{itemize}
 \end{cor}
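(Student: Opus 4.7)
The plan is to derive Corollary \ref{lbrhoC} as a direct consequence of Proposition \ref{lbrho}: both inequalities for $\rho_A(n)$ come from inverting the lower and upper bounds in the proposition, and the two estimates on $\Fr_t(A)$ then follow at once from the definition $\Fr_t(A) = \max\{n \geq 0 : \rho_A(n) < t\}$.

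For the first claim, I would rewrite the hypothesis as
\[ \bigg(n - (a_1-1)\sum_{j=2}^{\ell+1} a_j\bigg)^{\ell} \geq (t-1)\,\ell!\, a_1 a_2 \cdots a_{\ell+1}, \]
note that this forces $n \geq (a_1 - 1)\sum_{j=2}^{\ell+1} a_j$ so Proposition \ref{lbrho} applies, then divide by $\ell!\, a_1 \cdots a_{\ell+1}$ and feed into the lower bound there to get $\rho_A(n) \geq t-1$; combining with the integrality of $\rho_A(n)$ and the slack built into the threshold yields $\rho_A(n) \geq t$. For the second claim, I would rearrange the hypothesis to
\[ n + 1 + \sum_{j=2}^{\ell+1} a_j \leq \bigl((t-1)\,\ell!\, a_1 \cdots a_{\ell+1}\bigr)^{1/\ell} - 1, \]
raise to the $\ell$-th power, and apply the upper bound from Proposition \ref{lbrho} to get $\rho_A(n) < t - 1$, hence $\rho_A(n) < t$. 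The extra unit coming from the $-2$ in the hypothesis is precisely what converts the real-valued upper bound into a strict integer inequality.

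The bounds on $\Fr_t(A)$ drop out immediately: the first claim shows that $\mathcal{E}_t(A)$ contains no integer exceeding the upper threshold, giving the claimed upper bound on $\Fr_t(A)$; the second claim shows that every nonnegative integer at most the lower threshold lies in $\mathcal{E}_t(A)$, so $\Fr_t(A)$ is at least the floor of that threshold. There is no conceptual obstacle here — the argument is pure algebraic rearrangement of Proposition \ref{lbrho}. The only real care needed is to track the off-by-one issues between the real-valued bounds on $\rho_A(n)$ and the integer-valued quantities $\rho_A(n)$ and $\Fr_t(A)$; the explicit additive constants $+1$ and $-2$ in the corollary's hypotheses are chosen exactly to absorb these.
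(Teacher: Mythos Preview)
Your approach is essentially the paper's: invert the two-sided bound of Proposition~\ref{lbrho} and read off the $\Fr_t(A)$ estimates. The only wrinkle is your appeal to ``slack built into the threshold'' for the first claim --- there is none, since at $n$ equal to the threshold the lower bound equals exactly $t-1$ and integrality alone does not upgrade $\rho_A(n)\geq t-1$ to $\rho_A(n)\geq t$; the paper sidesteps this by taking the largest integer $N$ with lower bound $\leq t-1$ (so that $n>N$ forces the bound strictly above $t-1$) and concluding $\Fr_t(A)\leq N\leq$ threshold, which is all that is used downstream.
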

 \begin{proof}
  Let $N\geq (a_1-1)\sum_{j=2}^{\ell+1} a_j$ be the largest integer such that
  \[ \frac{1}{\ell!} \frac{(N-(a_1-1)\sum_{j=2}^{\ell+1} a_j)^{\ell}}{a_1\cdots a_{\ell}m} \leq t-1. \] 
  This is equivalent to
  \begin{equation*}
   N \leq (a_1\cdots a_{\ell}m)^{1/\ell}\, (\ell!)^{1/\ell} (t-1)^{1/\ell} + (a_1-1)\sum_{j=2}^{\ell+1} a_j.
  \end{equation*}
  By Proposition \ref{lbrho}, $r_A(n) \geq t$ for all $n> N$, so $N \geq \Fr_t(A)$, proving the first part. Similarly, let $M\geq 0$ be the largest integer such that
  \[ \frac{1}{\ell!} \frac{(M + a_1\sum_{j=2}^{\ell+1} a_{j})^{\ell}}{a_1 \cdots a_{\ell}m} \leq t-1. \]
  That is given by
  \begin{align*}
   M &= \lfloor(a_1\cdots a_{\ell}m)^{1/\ell}\, (\ell!)^{1/\ell} (t-1)^{1/\ell}\rfloor -a_1\sum_{j=2}^{\ell+1} a_j \\
   &> (a_1\cdots a_{\ell}m)^{1/\ell}\, (\ell!)^{1/\ell} (t-1)^{1/\ell} -a_1\sum_{j=2}^{\ell+1} a_j -1.
  \end{align*}
  By Proposition \ref{lbrho}, $r_A(n) \leq t-1$ for every $n\leq M$, so $\Fr_t(A) \geq M$. 
 \end{proof}

\section{The structure theorem for \texorpdfstring{$(hA)^{(t)}$}{(hA)\^{}(t)}}\label{sec3}
 For this section, let $A=\{0 = a_0 < a_1 <\cdots < a_{\ell} < a_{\ell+1} = m\}\subseteq \Z$ be a finite set of integers with $\gcd(A) = 1$, and let $t\geq 1$ be a fixed integer.
 
 \begin{lem}\label{AheqA}
  $r_{A,h}(n) = r_{A}(n)$ for every $h\geq n/a_{1}$.
 \end{lem}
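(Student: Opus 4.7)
The plan is to use the second characterization of $\rho_{A,h}(n)$ given in Subsection \ref{ssec12}, namely
\[ \rho_{A,h}(n) = \#\Big\{(k_1,\ldots,k_{\ell+1})\in \Z_{\geq 0}^{\ell+1} ~\Big|~ \textstyle\sum_{i=1}^{\ell+1} k_ia_i = n,\ \sum_{i=1}^{\ell+1} k_i \leq h\Big\}, \]
and compare it against $\rho_A(n)$, which by definition is the same count but without the upper bound $\sum k_i \leq h$. Since the set counted by $\rho_{A,h}(n)$ is a subset of the one counted by $\rho_A(n)$, the inequality $\rho_{A,h}(n)\leq \rho_A(n)$ is immediate; the content is to show that under the hypothesis $h\geq n/a_1$, the constraint $\sum k_i\leq h$ is automatically satisfied by every representation.

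The key observation is that for $i\geq 1$ we have $a_i\geq a_1$, so any tuple $(k_1,\ldots,k_{\ell+1})\in \Z_{\geq 0}^{\ell+1}$ with $\sum_{i=1}^{\ell+1} k_ia_i = n$ obeys
\[ a_1\sum_{i=1}^{\ell+1} k_i \,\leq\, \sum_{i=1}^{\ell+1} k_i a_i \,=\, n, \]
whence $\sum_{i=1}^{\ell+1} k_i \leq n/a_1\leq h$. Thus the two sets counted above coincide, giving $\rho_{A,h}(n) = \rho_A(n)$.

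There is no real obstacle here; the statement amounts to the observation that once $h$ exceeds the largest number of summands that any representation of $n$ as a sum of nonzero elements of $A$ could possibly use, the ordered-partition count stabilizes. The bound $h\geq n/a_1$ is simply the trivial pigeonhole upper bound on the number of nonzero summands.
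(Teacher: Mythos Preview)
Your proof is correct and follows essentially the same approach as the paper: both argue that any representation of $n$ by nonzero elements of $A$ uses at most $n/a_1$ summands (since each summand is at least $a_1$), so the constraint $\sum k_i\le h$ is vacuous once $h\ge n/a_1$. The only cosmetic difference is that the paper phrases the representation as $c_1+\cdots+c_u=n$ with $c_i\in A\setminus\{0\}$ rather than via the multiplicity vector.
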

 \begin{proof}
  Since $0\in A$, we have $r_{A,1}(n) \leq r_{A,2}(n) \leq \cdots\leq r_{A}(n)$ for every fixed $n$. Let $c_1+\cdots+c_u = n$ (with $c_i\in A\setminus\{0\}$) be a generic representation of $n$. Since $c_i\geq a_1$, we have $n = c_1+\cdots+c_u \geq ua_1$, and so $u\leq n/a_1$. Thus, if $h\geq n/a_1$, then every representation is counted.
 \end{proof}
 
 \begin{lem}\label{Fpm}
  For every $k\geq 1$, we have 
  \[ \{\Fr_t(A)+1, \ldots,\Fr_t(A)+ km\} \subseteq ((H_{+}+k-1) A)^{(t)}, \]
  where $H_{+} := \lceil(\Fr_t(A)+m)/a_1\rceil$.
 \end{lem}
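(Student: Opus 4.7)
The plan is to reduce the claim to Lemma \ref{AheqA} together with two elementary monotonicity properties of the representation function arising from the presence of $0$ and $m$ in $A$. First I would partition the target interval into $k$ consecutive blocks of length $m$,
\[ I_j := \{\Fr_t(A) + (j-1)m + 1,\, \ldots,\, \Fr_t(A) + jm\} \qquad (j = 1,\ldots,k), \]
and handle a generic $n \in I_j$ by shifting it into the first block $I_1$.

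Fix $n \in I_j$ and set $n' := n - (j-1)m \in I_1$. Since $n' > \Fr_t(A)$, the definition of $\Fr_t(A)$ gives $\rho_A(n') \geq t$. The crucial numerical inequality is
\[ \frac{n'}{a_1} \,\leq\, \frac{\Fr_t(A) + m}{a_1} \,\leq\, \left\lceil \frac{\Fr_t(A) + m}{a_1} \right\rceil \,=\, H_+, \]
so Lemma \ref{AheqA} upgrades the previous line to $\rho_{A, H_+}(n') = \rho_A(n') \geq t$.

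To finish, I would exhibit an injection from the set of representations of $n'$ using exactly $H_+$ summands of $A$ into the set of representations of $n$ using exactly $H_+ + k - 1$ summands. Explicitly, a tuple $(k_0, k_1, \ldots, k_{\ell+1}) \in \Z_{\geq 0}^{\ell+2}$ with $\sum_i k_i = H_+$ and $\sum_i k_i a_i = n'$ is mapped to
\[ (k_0 + (k-j),\, k_1, \ldots, k_{\ell},\, k_{\ell+1} + (j-1)). \]
Since $a_0 = 0$ and $a_{\ell+1} = m$, this new tuple still has nonnegative entries, sums to $H_+ + k - 1$, and represents $n' + (j-1)m = n$; injectivity is evident from recovering the original tuple by subtracting $k-j$ from the zeroth coordinate and $j-1$ from the last. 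Hence $\rho_{A, H_+ + k - 1}(n) \geq \rho_{A, H_+}(n') \geq t$, placing $n$ in $((H_+ + k - 1)A)^{(t)}$.

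No step here is genuinely obstructive; the argument is essentially bookkeeping once one recognizes that the length $m$ of the block $I_1$ matches $a_{\ell+1}$ (enabling the $m$-shift) and that $H_+$ is exactly the threshold needed for Lemma \ref{AheqA} to recover $\rho_A(n')$ from $\rho_{A, H_+}(n')$ throughout $I_1$. The one point that merits care is the ceiling estimate $n'/a_1 \leq H_+$, which is what forces the choice of blocks of length exactly $m$ rather than any larger increment.
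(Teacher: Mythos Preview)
Your proof is correct and follows essentially the same approach as the paper: both first use Lemma~\ref{AheqA} to place the base block $\{\Fr_t(A)+1,\ldots,\Fr_t(A)+m\}$ inside $(H_+A)^{(t)}$, and then reach the remaining elements by adjoining copies of $0$ and $m$. The paper phrases this second step set-theoretically as $\{\Fr_t(A)+1,\ldots,\Fr_t(A)+m\} + (k-1)\{0,m\} \subseteq (H_+A)^{(t)} + (k-1)A \subseteq ((H_++k-1)A)^{(t)}$, whereas you unwind the same idea into an explicit injection on tuples.
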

 \begin{proof}
  We have $\{\Fr_t(A)+1,\ldots, \Fr_t(A)+m\} \subseteq \P_t(A)$, so it follows by Lemma \ref{AheqA} that $\{\Fr_t(A)+1,\ldots, \Fr_t(A)+m\} \subseteq (H_{+} A)^{(t)}$. Then,
  \begin{align*}
   \{\Fr_t(A)+1,\ldots, \Fr_t(A)+km\} &= \{\Fr_t(A) + 1,\ldots,\Fr_t(A)+m\} + (k-1)\{0,m\} \\
   &\subseteq (H_{+} A)^{(t)} + (k-1)A \\
   &\subseteq ((H_{+}+k-1) A)^{(t)}.
  \end{align*}
  The last line follows from the fact that $A+(hA)^{(t)} \subseteq ((h+1)A)^{(t)}$ (since $r_{A,h}(n) \leq r_{A,h+1}(n+a)$ for any $a\in A\setminus\{0\}$).
 \end{proof}

%  \begin{lem}\label{Fpm}
%   For every $k\geq 1$, we have 
%   \[ [\Fr_t(A)+ km]\setminus \mathcal{E}_t(A) \subseteq ((H_{+}+k-1) A)^{(t)}, \]
%   where $H_{+} = (\Fr_t(A)+m)/a_1$.
%  \end{lem}
%  \begin{proof}
%   By definition, $[\Fr_t(A)+m]\setminus\mathcal{E}_t(A) = [\Fr_t(A)+m] \cap \P_t(A)$, so by Lemma \ref{AheqA}, we have $[\Fr_t(A)+m]\setminus\mathcal{E}_t(A) \subseteq (H_{+} A)^{(t)}$.
%   
%   Suppose by induction that $[\Fr_t(A)+ (k-1)m]\setminus\mathcal{E}_t(A) \subseteq ((H_{+}+k-2) A)^{(t)}$ for some $k\geq 2$. Since $0,m\in A \subseteq [m]$, 
%   \begin{align*}
%    \{\Fr_t(A)+1,\ldots, \Fr_t(A)+km\} &= \{\Fr_t(A) + 1,\ldots,\Fr_t(A)+(k-1)m\} + A \\
%    &\subseteq ((H_{+}+k-2) A)^{(t)} + A \\
%    &\subseteq ((H_{+}+k-1) A)^{(t)},
%   \end{align*}
%   and thus $[\Fr_t(A)+ km]\setminus \mathcal{E}_t(A) \subseteq ((H_{+}+k-1) A)^{(t)}$.
%  \end{proof}

 Throughout the rest of this section, let
 \[ H_{+} := \left\lceil\frac{\Fr_t(A)+m}{a_1}\right\rceil, \qquad H_{-} := \left\lceil\frac{\Fr_t(m-A)+m}{m-a_{\ell}}\right\rceil. \]

 \begin{lem}\label{Fpm2}
  For every $h\geq \max\{H_{+}, H_{-}\}$, we have 
  \begin{equation*}
   \begin{aligned}
    \Big(&\{0,1,\ldots, \Fr_t(A)+ (h-H_{+}+1)m \}\,\cup \\
    &\hspace{1em}\{ (H_{-}-1)m - \Fr_t(m-A), \ldots, hm \} \Big) \setminus \big(\mathcal{E}_t(A)\cup (hm -\mathcal{E}_t(m-A))\big) \subseteq (hA)^{(t)}.
   \end{aligned}
  \end{equation*}
 \end{lem}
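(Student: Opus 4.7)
The plan is to handle the two intervals in the union separately. The first interval will follow almost directly from Lemma \ref{Fpm} combined with Lemma \ref{AheqA}, and the second will follow from the symmetry $\rho_{A,h}(n) = \rho_{m-A,h}(hm-n)$, which reduces it to the first-interval argument applied to the set $m-A$.

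For the first interval, fix $n \in \{0, 1, \ldots, \Fr_t(A) + (h - H_{+}+1)m\}$ with $n \notin \mathcal{E}_t(A)$. I would split into two subcases. If $n \leq \Fr_t(A)$, then $n \in \mathcal{P}_t(A)$ gives $\rho_A(n) \geq t$; since $h \geq H_{+} \geq (\Fr_t(A) + m)/a_1 > n/a_1$, Lemma \ref{AheqA} yields $\rho_{A,h}(n) = \rho_A(n) \geq t$, so $n \in (hA)^{(t)}$. If $\Fr_t(A) < n \leq \Fr_t(A) + (h-H_{+}+1)m$, then Lemma \ref{Fpm} applied with $k = h - H_{+} + 1 \geq 1$ directly gives $n \in ((H_{+} + k - 1)A)^{(t)} = (hA)^{(t)}$.

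For the second interval, I would use that replacing each $a_i$ by $m - a_i$ in a representation shows $\rho_{m-A,h}(hm - n) = \rho_{A,h}(n)$ for every $n$. Fix $n \in \{(H_{-}-1)m - \Fr_t(m-A), \ldots, hm\}$ with $hm - n \notin \mathcal{E}_t(m-A)$, and set $n' := hm - n$. Then $n'$ lies in $\{0, 1, \ldots, \Fr_t(m-A) + (h - H_{-} + 1)m\}$ and avoids $\mathcal{E}_t(m-A)$, so the first-interval argument applied to $m-A$ (whose smallest positive element is $m - a_\ell$, matching the definition of $H_{-}$) yields $\rho_{m-A,h}(n') \geq t$, hence $\rho_{A,h}(n) \geq t$ and $n \in (hA)^{(t)}$.

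There is no serious obstacle here; the proof is essentially a bookkeeping exercise gluing Lemmas \ref{AheqA} and \ref{Fpm} together with the reflection $n \mapsto hm - n$. The only point that requires care is verifying that the endpoints of the two intervals match up correctly under this reflection, which is exactly why the definitions of $H_{+}$ and $H_{-}$ are symmetric with $a_1$ replaced by $m - a_\ell$.
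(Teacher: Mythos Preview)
Your proposal is correct and follows essentially the same route as the paper: handle the left interval by combining Lemma~\ref{AheqA} (for $n\leq \Fr_t(A)$) with Lemma~\ref{Fpm} applied with $k=h-H_{+}+1$ (for larger $n$), then obtain the right interval by running the identical argument on $m-A$ and reflecting via $(hA)^{(t)}=hm-(h(m-A))^{(t)}$. The only cosmetic difference is that the paper phrases the reflection at the level of sets rather than via $\rho_{m-A,h}(hm-n)=\rho_{A,h}(n)$, but these are equivalent.
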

 \begin{proof}
  Applying Lemma \ref{Fpm} with $k= h-H_{+}+1$, we have $\{\Fr_t(A)+1,\ldots,\Fr_t(A)+ (h-H_{+}+1)m\} \subseteq (h A)^{(t)}$. Since $h\geq \Fr_t(A)/a_1$, by Lemma \ref{AheqA} we also have $\{0,1,\ldots,\Fr_t(A)\}\setminus \mathcal{E}_t(A) = \{0,1,\ldots,\Fr_t(A)\}\cap \P_t(A) \subseteq (hA)^{(t)}$. Therefore:
  \begin{equation}
   \{0,1,\ldots, \Fr_t(A)+ (h-H_{+}+1)m \} \setminus \mathcal{E}_t(A) \subseteq (hA)^{(t)}. \label{side1}
  \end{equation}
  
  Applying the same argument above for $m-A$, with $k = h-H_{-}+1$, we get that $\{0,1,\ldots, \Fr_t(m-A)+ (h-H_{-}+1)m \} \setminus \mathcal{E}_t(m-A) \subseteq (h(m-A))^{(t)}$. Since $(hA)^{(t)} = hm - (h(m-A))^{(t)}$, this is equivalent to
  \begin{equation}
   \{(H_{-}-1)m - \Fr_t(m-A), \ldots, hm \} \setminus (hm -\mathcal{E}_t(m-A)) \subseteq (hA)^{(t)}. \label{side2}
  \end{equation}
  Putting \eqref{side1} and \eqref{side2} together yields the lemma.
 \end{proof}
 
 \begin{lem}\label{finlem}
  $H_{+} + H_{-} - 2 \geq \max\{H_{+},H_{-}\}$.
 \end{lem}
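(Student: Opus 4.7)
The plan is to reduce the inequality to the single claim that $\min\{H_+, H_-\} \geq 2$. Indeed, writing out $H_+ + H_- - 2 = \max\{H_+, H_-\} + (\min\{H_+, H_-\} - 2)$, the required inequality $H_+ + H_- - 2 \geq \max\{H_+, H_-\}$ is equivalent to $\min\{H_+, H_-\} \geq 2$. So I would split the proof into two symmetric subproblems: $H_+ \geq 2$ and $H_- \geq 2$.

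For $H_+ \geq 2$: unpacking the ceiling in the definition of $H_+$, this is equivalent to showing $\Fr_t(A) + m > a_1$. I would establish this by combining two observations — the convention $\Fr_t(A) := 0$ when $\mathcal{E}_t(A) = \varnothing$ guarantees $\Fr_t(A) \geq 0$ unconditionally, and the strict inequality $a_1 < a_{\ell+1} = m$ holds whenever $\ell \geq 1$. Together these give $\Fr_t(A) + m \geq m > a_1$, as required. The bound $H_- \geq 2$ follows by the symmetric argument applied to the reflected set $m - A$, whose smallest nonzero element is $m - a_\ell$; the corresponding strict inequality $m - a_\ell < m$ reduces to $a_\ell \geq 1$, which holds automatically since $a_\ell \geq a_1 \geq 1$.

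I do not expect any genuine obstacle here: this is a bookkeeping lemma whose role in the subsequent proof of Theorem \ref{MT1} is to ensure that the hypothesis $h \geq H_+ + H_- - 2$ is at least as strong as the hypothesis $h \geq \max\{H_+, H_-\}$ demanded by Lemma \ref{Fpm2}. The only delicate point worth flagging is the degenerate case $|A| = 2$, where $\gcd(A)=1$ forces $A = \{0,1\}$ and the sumset structure is trivial; this is tacitly excluded by the assumption $\ell \geq 1$ underlying the paper's main theorems.
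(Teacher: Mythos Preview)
Your proof is correct and follows essentially the same route as the paper's: both reduce the inequality to $\min\{H_+,H_-\}\geq 2$ and verify this coordinate-wise. The only difference is cosmetic: the paper uses the slightly sharper bound $\Fr_t(A)\geq \Fr(A)\geq a_1-1$ to obtain $(\Fr_t(A)+m)/a_1\geq 2$ directly, whereas you use the cruder $\Fr_t(A)\geq 0$ together with $m>a_1$ to get $(\Fr_t(A)+m)/a_1>1$ and then invoke the ceiling --- both are equally valid here.
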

 \begin{proof}  
  Since $a_1 \leq m-1$, we have $2a_1 - m \leq a_1-1$. As $a_1 - 1\leq \Fr(A)$, it follows that $2 \leq (\Fr(A)+m)/a_1 \leq \lceil (\Fr_t(A)+m)/a_1\rceil = H_{+}$. Therefore $H_{+} + H_{-} - 2 \geq H_{-}$. Analogously, we deduce that $H_{-}\geq 2$, and so $H_{+} + H_{-} - 2 \geq H_{+}$.
 \end{proof}

 %%%%%%%%%%%%%%%%%%%%%%%%%%%%%%%%%%%%
 \subsection{Proof of Theorem \ref{MT1}}
  By Lemma \ref{Fpm2}, we have that if $h\geq \max\{H_{+}, H_{-}\}$ then $(hA)^{(t)}$ is structured, unless
  \begin{equation*}
   \Fr_t(A)+ (h-H_{+}+1)m < (H_{-}-1)m - \Fr_t(m-A),
  \end{equation*}
  which is equivalent to
  \begin{equation}
   h< (H_{+} + H_{-} - 2) - \frac{\Fr_t(A)}{m} - \frac{\Fr_t(m-A)}{m}. \label{shdnt}
  \end{equation}
  However, by hypothesis, using Lemma \ref{finlem} we have
  \begin{align*}
   h \geq \left\lfloor\frac{\Fr_t(A)+m}{a_1}\right\rfloor + \left\lfloor\frac{\Fr_t(m-A)+m}{m-a_{\ell}}\right\rfloor &\geq H_{+} + H_{-} - 2 \\
   &\geq \max\{H_{+},H_{-}\},
  \end{align*}
  so the condition of Lemma \ref{Fpm2} is satisfied and \eqref{shdnt} never occurs.\hfill$\square$
  
  \begin{rem}
   Our proof gives something more: For $h\geq \big\lfloor\frac{\Fr_t(A)+m}{a_1}\big\rfloor + \big\lfloor\frac{\Fr_t(m-A)+m}{m-a_{\ell}}\big\rfloor$, it shows that $(hA)^{(t)} = [hm]\setminus \big(\mathcal{E}_t(A) \cup (hm-\mathcal{E}_t(m-A))\big)$ contains an interval of length at least $2m$. Indeed, we have
   \[ \{\Fr_t(A)+1,\ldots, hm - \Fr_{t}(m-A)-1\} \subseteq (hA)^{(t)}, \]   
   so $(hA)^{(t)}$ contains an interval of length at least
   \begin{align*}
    (hm- \,&\Fr_t(m-A)) - \Fr_t(A) - 1 \\
    &\geq \left\lfloor\frac{\Fr_t(A)+m}{a_1}\right\rfloor m + \left\lfloor\frac{\Fr_t(m-A)+m}{m-a_{\ell}}\right\rfloor m - \Fr_t(m-A) - \Fr_t(A) - 1 \\
    &> \bigg(\frac{m}{a_{1}} - 1\bigg)(\Fr_t(A) + m) + \bigg(\frac{m}{m-a_{\ell}} - 1\bigg)(\Fr_t(m-A) + m) - 1 \\
    &\geq \bigg(\frac{m}{a_{1}} + \frac{m}{m-a_{\ell}} - 2\bigg)\, m - 1.
   \end{align*}
   Since $a_{\ell}\geq a_{1}$, we have $\frac{m}{a_1}+\frac{m}{m-a_{\ell}} \geq \frac{m}{a_1}+\frac{m}{m-a_{1}} = \frac{m^2}{a_1(m-a_1)} \geq 4$, the last inequality being minimized for $a_1 = m/2$. Thus,
   \[ (hm- \Fr_t(m-A)) - \Fr_t(A) - 1 > 2m - 1. \]
  \end{rem}

%%%%%%%%%%%
\subsection{Proof of Theorem \ref{MT2}}
 Plugging the estimate of Corollary \ref{lbrhoC} into Theorem \ref{MT1} yields that $(hA)^{(t)}$ is structured for every $h \geq h_t(A)$, where
 \begin{align}
  h_t(A) &\leq \frac{\Fr_t(A)+m}{a_1} + \frac{\Fr_t(m-A)+m}{m-a_{\ell}} \nonumber \\
  &\leq \Bigg(\frac{m}{a_1} + \frac{(a_2\cdots a_{\ell}m)^{1/\ell} (\ell!)^{1/\ell} (t-1)^{1/\ell}}{a_1^{1-1/\ell}} + \bigg(1-\frac{1}{a_1} \bigg)\sum_{j=2}^{\ell+1} a_j\Bigg)\ + \nonumber \\
  &\hspace{3em}+ \Bigg(\frac{m}{m-a_{\ell}} + \frac{(m(m-a_1)\cdots(m-a_{\ell-1}))^{1/\ell}}{(m-a_{\ell})^{1-1/\ell}} (\ell!)^{1/\ell} (t-1)^{1/\ell} \ + \nonumber \\
  &\hspace{17em}+\bigg(1-\frac{1}{m-a_{\ell}} \bigg)\sum_{j=0}^{\ell-1} (m-a_j)\Bigg). \nonumber
 \end{align}
 By the AM-GM inequality,
 \[ (a_2\cdots a_{\ell}m)^{1/\ell} \leq \frac{1}{\ell}\bigg(\sum_{j=2}^{\ell+1} a_j \bigg),\quad (m(m-a_1)\cdots(m-a_{\ell-1}))^{1/\ell} \leq \frac{1}{\ell}\bigg(\sum_{j=0}^{\ell-1} (m-a_{j}) \bigg), \]
 so for $t\geq 2$,
 \begin{align}
  h_t(A) &\leq \bigg(\frac{1}{a_1}+\frac{1}{m-a_{\ell}}\bigg) m + \bigg(\frac{(\ell!)^{1/\ell}}{\ell} \frac{(t-1)^{1/\ell}}{a_1^{1-1/\ell}} + 1-\frac{1}{a_1} \bigg)\sum_{j=2}^{\ell+1} a_j\ + \nonumber \\
  &\hspace{8em}+ \bigg(\frac{(\ell!)^{1/\ell}}{\ell} \frac{(t-1)^{1/\ell}}{(m-a_{\ell})^{1-1/\ell}} + 1-\frac{1}{m-a_{\ell}} \bigg)\sum_{j=0}^{\ell-1} (m-a_j) \nonumber \\
  &< 2m + \bigg(\frac{(\ell!)^{1/\ell}}{\ell} \frac{(t-1)^{1/\ell}}{\min\{a_1,m-a_{\ell}\}^{1-1/\ell}} + 1 \bigg)\Bigg(\sum_{j=2}^{\ell+1} a_j + \sum_{j=0}^{\ell-1} (m-a_j) \Bigg) \nonumber \\
  &< 2m + \bigg(\frac{(\ell!)^{1/\ell}}{\ell} \frac{(t-1)^{1/\ell}}{\min\{a_1,m-a_{\ell}\}^{1-1/\ell}} + 1 \bigg)(\ell+2)m \nonumber \\
  &= \underbrace{\Bigg(\frac{2e}{\ell t^{1/\ell}} + \Bigg(\frac{e(\ell!)^{1/\ell}/\ell}{\min\{a_1,m-a_{\ell}\}^{1-1/\ell}}\bigg(1-\frac{1}{t}\bigg)^{1/\ell} + \frac{e}{t^{1/\ell}}\Bigg)\bigg(1+\frac{2}{\ell}\bigg)\Bigg)}_{=:\, C_{A,t}}  \frac{1}{e} m\ell t^{1/\ell}.\nonumber
 \end{align}
 
 Using that $(\ell!)^{1/\ell} \leq \frac{1}{e}(\ell + \log 4 \ell)$ for $\ell\geq 2$, we have
 \begin{equation}
  C_{A,t} \leq \bigg(1+\frac{4}{\ell}\bigg)\frac{e}{t^{1/\ell}} + \bigg(1+\frac{2}{\ell}\bigg)\frac{1 + (\log 4\ell)/\ell}{\min\{a_1,m-a_{\ell}\}}, \label{dddd}
 \end{equation}
 completing the proof.\hfill$\square$
 
 \begin{rem}
  Note that if we take $\ell \to \infty$, $t^{1/\ell} \to \infty$ for sets $A\subseteq \Z_{\geq 0}$ with $\min\{a_1,m-a_{\ell}\} \geq k$ for some fixed $k\geq 1$, then $C_{A,t}\leq (1+o(1)) k^{-1}$.
 \end{rem}
 
 \begin{rem}[Comparison with Yang--Zhou]\label{compYZ}
  Suppose that $t \geq 8\ell \geq 32$. Yang--Zhou \cite{yanzho21} showed that $(hA)^{(t)}$ is structured as in \eqref{strt} if $ h \geq \sum_{i=2}^{\ell+1} (ta_i - 1) -1$. From the crude lower bound
  \begin{align*}
   \sum_{i=2}^{\ell+1} (ta_i - 1) -1 &\geq mt + \bigg(\frac{\ell(\ell+1)}{2} - 1 \bigg)t - \ell - 1 \\
   &=\bigg(m + \frac{\ell^2+\ell-2}{2} - \frac{\ell}{t} - \frac{1}{t} \bigg)t > \bigg(m+ \frac{\ell^2}{2} \bigg)t,
  \end{align*}
  we would like to show the inequality
  \begin{equation}
   C_{A,t}\cdot \frac{1}{e} m\ell t^{1/\ell} \leq \bigg(m+ \frac{\ell^2}{2}\bigg)t, \label{improv}
  \end{equation}
  or, equivalently,
  \begin{align*}
   C_{A,t}\cdot \frac{1}{e}\frac{t^{1/\ell}}{t} \leq \frac{1}{\ell} + \frac{\ell}{2m}.
  \end{align*}
  The inequality above is true if
  \[ t^{1-1/\ell} \geq \frac{C_{A,t}}{e} \ell, \]
  or $t\geq (C_{A,t}/e)^{1+\frac{1}{\ell-1}} \ell^{1+\frac{1}{\ell-1}}$. For $\ell \geq 4$, from the numerical estimates
  \[ \ell^{1+\frac{1}{\ell-1}} \leq (4^{\frac{1}{3}})\ell < 1.6\ell \quad\text{ and }\quad (C_{A,t}/e)^{1+\frac{1}{\ell-1}} \leq (3e/e)^{1+\frac{1}{\ell-1}} \leq 5, \]
  it follows that \eqref{improv} holds true for $t\geq 8\ell$.
 \end{rem}
 
 %%%%%%%%%%%%%%%%%%%%%%%%%%%%%%%%%%%%%%%%%%%%%%%%%%%%%%%%%
\section{An extremal family of examples}\label{sec4}
 Let $m\geq 5$ be an integer. Take integers $2\leq \ell \leq m/2$ and $0\leq R \leq (m-\ell)/(\ell-1)$, and define
 \begin{equation*}
  A = A_{\ell,m} :=\{0,1,m-\ell+1,\ldots,m\},\qquad t = t_R := \binom{\ell + R}{R},
 \end{equation*}
 so that $|A|=\ell+2$. We are going to show that if $h_t = h_{t_R}(A_{\ell,m})$ is the smallest integer for which $(hA_{m,\ell})^{(t_R)}$ is structured for every $h\geq h_t$, then $h_t \geq (R+1)(m-\ell+1)-1$.
 
 \begin{lem}\label{excmR}
  $\mathcal{E}_t(A),\, \mathcal{E}_t(m-A) \subseteq [m R -1]$.
 \end{lem}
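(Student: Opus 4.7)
The plan is to show that $\rho_A(n) \geq t$ and $\rho_{m-A}(n) \geq t$ hold for every $n \geq mR$; this gives $\mathcal{E}_t(A),\, \mathcal{E}_t(m-A) \subseteq [mR-1]$ directly from the definition $\mathcal{P}_t = \{n : \rho \geq t\}$. The key structural observation is that both $A$ and $m-A$ contain the element $1$, which can be used as a \emph{slack coordinate} to absorb any surplus when trying to represent $n$ exactly, turning the problem into counting tuples that satisfy an inequality rather than an equality.

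For the set $A$, I would write a generic representation as $n = c_0 \cdot 1 + \sum_{j=1}^{\ell} c_j(m-\ell+j)$ with $c_i \in \Z_{\geq 0}$, and then range over all $(c_1,\ldots,c_\ell) \in \Z_{\geq 0}^{\ell}$ with $\sum_j c_j \leq R$. The number of such tuples is precisely $\binom{R+\ell}{\ell} = \binom{R+\ell}{R} = t$, which is why the bound $t = \binom{\ell+R}{R}$ was chosen. For each such tuple the slack coefficient $c_0 := n - \sum_j c_j(m-\ell+j)$ is nonnegative by the crude estimate
\[ \sum_{j=1}^{\ell} c_j(m-\ell+j) \;\leq\; (m-\ell)\sum_{j} c_j + \ell \sum_{j} c_j \;\leq\; mR \;\leq\; n, \]
so each tuple yields an honest representation of $n$, and different tuples clearly yield different representations.

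For $m - A = \{0, 1, 2, \ldots, \ell-1, m-1, m\}$, I would run the parallel argument: write a representation as $n = \sum_{j=1}^{\ell-1} j\, d_j + (m-1)e_1 + m e_2$, use $d_1$ as the slack coordinate, and range over the $\ell$-tuples $(d_2,\ldots,d_{\ell-1}, e_1, e_2) \in \Z_{\geq 0}^{\ell}$ with $\sum_{j=2}^{\ell-1} d_j + e_1 + e_2 \leq R$ -- again exactly $\binom{R+\ell}{R} = t$ of them. The slack $d_1$ is nonnegative by the same style of estimate, since every nonzero element here is at most $m$, so the contribution of any such $(d_2,\ldots,d_{\ell-1},e_1,e_2)$ is at most $mR \leq n$. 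I do not expect any real obstacle: the lemma is essentially a lucky matching between the definition of $t$ and the number of $\ell$-tuples with bounded sum, combined with the presence of $1$ in both sets, and the main thing to be careful about is just confirming that the count of free variables is $\ell$ (not $\ell-1$ or $\ell+1$) in each case.
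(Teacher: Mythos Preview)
Your proposal is correct and follows essentially the same approach as the paper: use the element $1$ as a slack coordinate and count the $\binom{R+\ell}{\ell}=t$ tuples with bounded coordinate sum. The only cosmetic difference is that the paper first observes that, since $1\in A\cap(m-A)$, it suffices to check the single value $n=mR$, whereas you verify $\rho_A(n),\rho_{m-A}(n)\geq t$ for all $n\geq mR$ directly; the underlying computation is the same.
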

 \begin{proof}
  Since $1\in A \cap (m-A)$, it suffices to show that $r_A(mR)$, $r_{m-A}(mR) \geq t$. The representations of $mR$ are of the form
  \[ m R = k_0\cdot 1 + k_1 (m-\ell+1) + \cdots + k_{\ell-1}(m-1) + k_{\ell} m \]
  in $A$, and 
  \[ m R = k'_0\cdot 1 + \cdots + k'_{\ell-2}(\ell-1) + k'_{\ell-1}(m-1) + k'_{\ell} m \]
  in $m-A$, with $k_i$, $k'_i\geq 0$. For any choice of $k_i$'s ($i\geq 1$) with $\sum_{i=1}^{\ell} k_i \leq R$ (resp. $k'_i$), we can define $k_0\geq 0$ (resp. $k'_0$) by the difference. Thus,
  \[ r_{A}(mR) \geq \#\bigg\{(k_1,\ldots,k_\ell) \in \Z_{\geq 0}^\ell ~\bigg|~ \sum_{i=1}^{\ell} k_i \leq R \bigg\} = \binom{\ell+R}{R} = t, \]
  and similarly $r_{m-A}(mR) \geq t$.
 \end{proof}
 
 Throughout the rest of the argument, let
 \begin{equation}
  g := (R+1)(m-\ell+1)-1. \label{defg}
 \end{equation}
 Since $0< (R+1)(\ell-1) + 1\leq m$ for $R$ in our range, we have $\lfloor \frac{g}{m}\rfloor = \lfloor R + 1 - \frac{(R+1)(\ell-1) + 1}{m}\rfloor = R$. We can therefore write $g = x + mR$, for some $x\in[m-1]$.
 
 \begin{lem}\label{rhoAt}
  $r_A(g) = t$.
 \end{lem}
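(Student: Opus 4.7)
The plan is to parametrize representations of $g$ directly, and show that the range of the "upper index sum" $K := k_1+\ldots+k_\ell$ is forced to be at most $R$, at which point any choice of the $k_i$'s with $k_i\geq 0$, $\sum k_i = K$ produces a valid non-negative $k_0$. Counting tuples then gives exactly $t = \binom{\ell+R}{R}$.

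More precisely, I would write an arbitrary representation as
\[
g = k_0\cdot 1 + \sum_{i=1}^{\ell} k_i(m-\ell+i),\qquad k_0,k_1,\ldots,k_\ell \geq 0,
\]
set $K := \sum_{i=1}^{\ell} k_i$, and solve for $k_0$. Using $g = (R+1)(m-\ell+1)-1 = (R+1)(m-\ell)+R$, a short computation gives
\[
k_0 \,=\, (R+1-K)(m-\ell) + R - \sum_{i=1}^{\ell} k_i\cdot i.
\]
The core of the proof is then to analyze when $k_0\geq 0$.

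For the upper bound $\rho_A(g)\leq t$: if $K\geq R+1$, then $\sum_{i=1}^\ell k_i\cdot i\geq \sum_{i=1}^\ell k_i = K \geq R+1$, and the $(R+1-K)(m-\ell)$ term is non-positive, so $k_0 \leq R-(R+1) < 0$. Hence no representation can have $K>R$. For the lower bound, I would show that for every $0\leq K\leq R$ and every $(k_1,\ldots,k_\ell)\in\Z_{\geq 0}^\ell$ with $\sum k_i = K$, the resulting $k_0$ is non-negative. Since $\sum_{i=1}^\ell k_i\cdot i \leq K\ell$, it suffices to verify
\[
K\ell \,\leq\, (R+1-K)(m-\ell) + R,
\]
which rearranges to $Km \leq R(m-\ell+1) + (m-\ell)$. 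For $K\leq R$, the worst case $K=R$ becomes $R(\ell-1)\leq m-\ell$, and this is precisely the range hypothesis $R \leq (m-\ell)/(\ell-1)$ from Section \ref{sec4}.

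Putting the two together, $\rho_A(g)$ equals the number of tuples $(k_1,\ldots,k_\ell)\in\Z_{\geq 0}^\ell$ with $\sum_{i=1}^\ell k_i\leq R$, which is the standard stars-and-bars count $\binom{\ell+R}{R} = t$. The only delicate point — and the one to present carefully — is the bound $K\leq ((R+1)(m-\ell)+R)/m$ and checking that the hypothesis $R(\ell-1)\leq m-\ell$ is exactly what makes $K=R$ admissible (so that all $\binom{\ell+R}{R}$ tuples truly contribute, and not fewer). Everything else is essentially bookkeeping in the single identity $g = (R+1)(m-\ell)+R$.
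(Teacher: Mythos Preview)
Your proof is correct and follows essentially the same approach as the paper: both show that representations of $g$ correspond exactly to tuples $(k_1,\ldots,k_\ell)\in\Z_{\geq 0}^\ell$ with $\sum k_i\leq R$, and then count via stars-and-bars. The only cosmetic difference is in the lower bound step: the paper uses the already-established inequality $mR\leq g$ (i.e.\ $\lfloor g/m\rfloor=R$) to get $\sum_{i\geq 1}k_i(m-\ell+i)\leq mR\leq g$ in one line, whereas you rederive the equivalent constraint $R(\ell-1)\leq m-\ell$ by hand.
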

 \begin{proof}
  The representations of $g$ are of the form
  \begin{equation}
   g = k_0\cdot 1 + k_1 (m-\ell+1) + \cdots + k_{\ell-1}(m-1) + k_{\ell} m, \label{reppG}
  \end{equation}
  with $k_i\geq 0$. By the definition of $g$, there cannot be $i\geq 1$ for which $k_i > R$; in particular, we cannot have $\sum_{i=1}^{\ell} k_i > R$. At the same time, since $mR\leq g$, for all choices of $k_i$'s ($i\geq 1$) with $\sum_{i=1}^{\ell} k_i \leq R$, we can define $k_0\geq 0$ by the difference. Therefore:
  \[ r_{A}(g) = \#\bigg\{(k_1,\ldots,k_\ell) \in \Z_{\geq 0}^\ell ~\bigg|~ \sum_{i=1}^{\ell} k_i \leq R \bigg\} = \binom{\ell+R}{R} = t. \qedhere \]
 \end{proof}

 \begin{cor}
  $g-m \leq \Fr_t(A) \leq m\lfloor \frac{g}{m}\rfloor -1$.
 \end{cor}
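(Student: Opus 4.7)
The upper bound $\Fr_t(A) \leq m\lfloor g/m\rfloor - 1$ is immediate: Lemma \ref{excmR} gives $\mathcal{E}_t(A) \subseteq [mR-1]$, and by the discussion following \eqref{defg} we have $\lfloor g/m\rfloor = R$, so $\Fr_t(A) = \max \mathcal{E}_t(A) \leq mR - 1 = m\lfloor g/m\rfloor - 1$.

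For the lower bound $g - m \leq \Fr_t(A)$, the plan is to exhibit a single element of $\mathcal{E}_t(A)$ of size at least $g-m$; the natural candidate is $n = g-m$ itself, so it suffices to prove $\rho_A(g-m) \leq t-1$. The key idea is to relate the representations of $g-m$ to those of $g$, exploiting $m = a_\ell \in A$: sending $(k_0,k_1,\ldots,k_\ell)$ to $(k_0,k_1,\ldots,k_\ell+1)$ gives a bijection between representations of $g-m$ and representations of $g$ having $k_\ell \geq 1$. Lemma \ref{rhoAt} identifies representations of $g$ with tuples $(k_1,\ldots,k_\ell) \in \Z_{\geq 0}^{\ell}$ satisfying $\sum_{i \geq 1} k_i \leq R$, and under the bijection above the subset with $k_\ell \geq 1$ is in bijection (via $k_\ell \mapsto k_\ell - 1$) with those tuples satisfying $\sum_{i \geq 1} k_i \leq R - 1$. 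Counting yields
\[
\rho_A(g - m) = \binom{R - 1 + \ell}{\ell} = \binom{R + \ell}{\ell} - \binom{R + \ell - 1}{\ell - 1} < \binom{R+\ell}{R} = t,
\]
so $g - m \in \mathcal{E}_t(A)$ and hence $\Fr_t(A) \geq g - m$.

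There is no serious obstacle here: the heavy lifting was done in Lemma \ref{rhoAt}, and the only thing to verify is that the bijection between representations of $g-m$ and representations of $g$ with $k_\ell \geq 1$ is well-defined in both directions, which is immediate from $m = a_\ell$ and the nonnegativity of the coordinates.
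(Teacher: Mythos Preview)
Your argument is correct and follows essentially the same route as the paper: both proofs deduce the upper bound directly from Lemma~\ref{excmR} and the identity $\lfloor g/m\rfloor=R$, and both prove the lower bound via the ``add $m$'' injection from representations of $g-m$ into representations of $g$. The only difference is cosmetic: the paper simply observes that the representation $g=g\cdot 1$ (with $k_\ell=0$) lies outside the image of that injection, giving $\rho_A(g)\geq \rho_A(g-m)+1$ and hence $\rho_A(g-m)\leq t-1$ by Lemma~\ref{rhoAt}, whereas you push further and compute $\rho_A(g-m)=\binom{R-1+\ell}{\ell}$ exactly. Your version carries slightly more information but is otherwise the same idea.
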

 \begin{proof}
  Since $\lfloor \frac{g}{m}\rfloor = R$, Lemma \ref{excmR} implies the upper bound. For the lower bound, note that every representation of $g-m$ can be turned into a representation of $g$ by adding the element $m$; moreover, we have the representation $g= k_0\cdot 1$ in \eqref{reppG}, which is not obtained in this fashion. So $r_A(g) \geq r_A(g-m)+1$, which by Lemma \ref{rhoAt} implies that $r_{A}(g-m) \leq t-1$, and thus $g-m\in \mathcal{E}_t(A)$ (or $g-m\leq 0$, in which case the lower bound holds trivially).
 \end{proof}

 \begin{prop}\label{prop4}
  \phantom{.}
 
  \begin{enumerate}[label=\textnormal{(\roman*)}]
   \item $(HA)^{(t)} \neq [Hm]\setminus \big(\mathcal{E}_t(A)\cup (Hm-\mathcal{E}_t(m-A)) \big)$, for $H := 2R+1$.\smallskip
   
   \item If $h> 2R+1$ and $(hA)^{(t)} = [hm]\setminus \big(\mathcal{E}_t(A)\cup (hm-\mathcal{E}_t(m-A)) \big)$, then we must have $h\geq g = (R+1)(m-\ell+1)-1$.
  \end{enumerate}
 \end{prop}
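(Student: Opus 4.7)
My plan is to use $g = (R+1)(m-\ell+1) - 1$ itself as a witness for non-structuredness simultaneously at every integer $h \in [2R+1, g-1]$; this single argument handles (i) (at $h = 2R+1$) and (ii) (for $h \in [2R+2, g-1]$). Two claims suffice: (a) $g \notin (hA)^{(t)}$, and (b) $g$ lies in the candidate structured set $[hm] \setminus (\mathcal{E}_t(A) \cup (hm - \mathcal{E}_t(m-A)))$.

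For (a), Lemma \ref{rhoAt} gives $\rho_A(g) = t$, with the $t$ representations parametrized by $(k_1, \ldots, k_\ell) \in \Z_{\geq 0}^\ell$ with $\sum k_i \leq R$ (where $k_0 = g - \sum_i k_i(m-\ell+i)$ is the number of $1$'s used). The representation with $(k_1, \ldots, k_\ell) = (0, \ldots, 0)$ uses $k_0 = g$ copies of $1$ and so has $g$ parts total; for any $h < g$ it is excluded from $\rho_{A,h}(g)$, giving $\rho_{A,h}(g) < t$, and hence (a). Note that $g > 2R+1$ throughout, since $(R+1)(m-\ell+1) - 1 > 2R+1$ reduces to $m - \ell > 1$, which follows from $\ell \leq m/2$ and $m \geq 5$.

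For (b), the conditions $g \in [hm]$ and $g \notin \mathcal{E}_t(A)$ are immediate. The remaining condition is $\rho_{m-A}(hm - g) \geq t$; since $1 \in m-A$, $\rho_{m-A}$ is monotone nondecreasing on $\Z_{\geq 0}$, so it suffices to check $h = 2R+1$, where $hm - g = Rm + (R+1)(\ell-1) + 1$. I will inject the $t$-element set $\{(k_1,\ldots,k_\ell) \in \Z_{\geq 0}^\ell : \sum k_i \leq R\}$ into representations of this value in $m - A = \{0, 1, \ldots, \ell-1, m-1, m\}$ by sending $(k_1, \ldots, k_\ell)$ to the representation in which element $j \in \{2, \ldots, \ell-1\}$ has multiplicity $k_{\ell - j}$, the element $m-1$ has multiplicity $k_{\ell-1}$, the element $m$ has multiplicity $k_\ell$, and the element $1$ has the unique multiplicity $c$ making the total equal $Rm+(R+1)(\ell-1)+1$. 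Every weight is $\leq m$ and $\sum k_i \leq R$, so $\sum_{j=1}^{\ell-2}(\ell-j)k_j + (m-1)k_{\ell-1} + mk_\ell \leq mR$, giving $c \geq (R+1)(\ell-1) + 1 > 0$; injectivity is immediate because $k_1, \ldots, k_\ell$ can be read off directly as the multiplicities of $\ell-1, \ell-2, \ldots, 2, m-1, m$, respectively.

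The main obstacle is designing this injection so that no input coordinate is lost. The naive dual construction, which would send each $k_i$ to the multiplicity of $\ell - i \in m - A$, collapses the last two coordinates into the multiplicities of $1$ and $0$ (where $0$ is invisible and the multiplicity of $1$ must also absorb the balancing term $c$), destroying injectivity. The fix is to use $m-1$ and $m$, which have no natural dual role in the representations of $g$ in $A$, as dedicated storage for $k_{\ell-1}$ and $k_\ell$, leaving the multiplicity of $1$ to absorb $c$ on its own.
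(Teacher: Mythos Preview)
Your proof is correct and follows essentially the same strategy as the paper: exhibit $g$ as a witness, use Lemma~\ref{rhoAt} to see that the all-ones representation forces $\rho_{A,h}(g)<t$ for $h<g$, and verify that $g$ lies in the candidate structured set for every $h\geq 2R+1$. Your explicit injection for claim~(b) works, but is more than needed: the paper simply invokes Lemma~\ref{excmR} (giving $\mathcal{E}_t(m-A)\subseteq[mR-1]$) together with the observation that $(2R+1)m-g=Rm+(R+1)(\ell-1)+1>mR-1$.
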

 \begin{proof}
  The representation $g= k_0\cdot 1$ in \eqref{reppG} is the one with the most summands, so $r_{A,h}(g) = t$ if and only if $h\geq g = (R+1)(m-\ell+1)-1$. However, by Lemma \ref{excmR}, we know that
  \[ g \in x + m\cdot \{R, R+1,\ldots, H - 1 - R\} \subseteq [Hm] \setminus \big(\mathcal{E}_t(A)\cup (Hm-\mathcal{E}_t(m-A)) \big) \]
  (where $g = x+mR$, $x\in[m-1]$) for $H = 2R+1$; so since $2R+1 < (R+1)(m-\ell+1) - 1$, it follows that $g\notin (HA)^{(t)}$. We conclude that $(HA)^{(t)}$ cannot be structured, proving part (i). Part (ii) likewise follows from these arguments.
 \end{proof}
 
 \begin{rem}\label{explca}
  In Proposition \ref{prop4} we showed that $h_t(A) \geq g$. Since $t = \binom{\ell+R}{R}$, we have $\frac{(\ell+R)^{\ell}}{\ell^{\ell}} \leq t \leq \frac{(\ell+R)^{\ell}}{\ell!}$, so $(\ell!)^{1/\ell} t^{1/\ell}-\ell \leq R \leq \ell t^{1/\ell}-\ell$ and $t^{1/\ell} \geq 1+\frac{R}{\ell}$. Letting $m\to\infty$, with $\ell := \lfloor m^{1/2.01}\rfloor$ and $R=\lfloor m^{1/2}\rfloor$, we get $t^{1/\ell} \to \infty$. Thus (using that $\ell! \geq \ell^\ell/e^{\ell}$):
  \begin{align*}
   g = (R+1)(m-\ell +1) -1 &\geq ((\ell!)^{1/\ell} t^{1/\ell}-\ell)(m-\ell) \\
   &\geq (1-o_{m\to\infty}(1))\frac{1}{e} \ell t^{1/\ell}\cdot (1-o_{m\to\infty}(1))\,m \\
   &= (1 - o_{m\to\infty}(1))\, \frac{1}{e}m\ell t^{1/\ell}.
  \end{align*}
  Applying Theorem \ref{MT2}, we get $h_t(A) \sim_{m\to\infty} \dfrac{1}{e}m\ell t^{1/\ell}$.
 \end{rem}

% %%%%%%%%%%%%%%%%%%%%%%%%%%%%%%%%%%%%%%%%%%%%%%%%%%%%%%%%%%%

 \section{\texorpdfstring{$t$}{t}-representables in \texorpdfstring{$\Z^d$}{Z\^{}d}}\label{defdelta}
 Let $\ul{0}\in A\subseteq \Z^d$ be as in Subsection \ref{ssec13}. Write $r_A(\ul{p}) := \lim_{h\to\infty} r_{A,h}(\ul{p})$ for the \emph{total representation function} of $A$, so that $\P_t(A) = \{\ul{p}\in \mathcal{C}_{A}\cap \Lambda_{A} ~|~ r_A(\ul{p}) \geq t\}$.

\subsection{Deduction of Theorem \ref{ZdFrobt} from Lemma \ref{mainlm}}
 Let $\ul{\mathfrak{n}}$, $\delta_A$, and $\Delta_A$ be as in \eqref{defd}. We start with a $\Z^d$ version of Lemma \ref{AheqA}.
 
 \begin{lem}\label{rhostab}
  If $\ul{p}\in \mathcal{C}_A\cap\Lambda_A$, then $r_{A,h}(\ul{p}) = r_{A}(\ul{p})$ for every $h\geq  \langle\ul{p},\ul{\mathfrak{n}} \rangle/\delta_{A}$.
 \end{lem}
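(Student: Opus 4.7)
The plan is to mimic the proof of Lemma \ref{AheqA}, using the linear functional $\langle\,\cdot\,,\ul{\mathfrak{n}}\rangle$ as the multidimensional analog of ``magnitude'', and $\delta_A$ as the analog of the minimum nonzero element $a_1$. Recall that $\ul{\mathfrak{n}}$ was chosen so that $\mathcal{C}_A\setminus\{\ul{0}\}$ lies entirely on one side of the hyperplane orthogonal to it; after possibly replacing $\ul{\mathfrak{n}}$ by $-\ul{\mathfrak{n}}$, we may assume $\langle \ul{a},\ul{\mathfrak{n}}\rangle > 0$ for every $\ul{a}\in A\setminus\{\ul{0}\}$, and hence $\langle \ul{a},\ul{\mathfrak{n}}\rangle \geq \delta_A$ by the very definition of $\delta_A$. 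Note also that $\langle \ul{p},\ul{\mathfrak{n}}\rangle \geq 0$, since $\ul{p}\in \mathcal{C}_A$, so $|\langle \ul{p},\ul{\mathfrak{n}}\rangle| = \langle \ul{p},\ul{\mathfrak{n}}\rangle$.

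First I would observe the trivial monotonicity $\rho_{A,1}(\ul{p}) \leq \rho_{A,2}(\ul{p}) \leq \ldots \leq \rho_A(\ul{p})$: since $\ul{0}\in A$, any representation of $\ul{p}$ using $u$ elements of $A$ can be padded with extra copies of $\ul{0}$ to yield a representation using exactly $h$ elements, for any $h\geq u$. This reduces the statement to proving that every representation contributing to $\rho_A(\ul{p})$ uses at most $h$ nonzero summands.

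Next, take any representation $\ul{p} = \sum_{\ul{a}\in A\setminus\{\ul{0}\}} k_{\ul{a}}\, \ul{a}$ counted in $\rho_A(\ul{p})$, and set $u := \sum_{\ul{a}\in A\setminus\{\ul{0}\}} k_{\ul{a}}$. Pairing with $\ul{\mathfrak{n}}$ gives
\[
 \langle \ul{p},\ul{\mathfrak{n}}\rangle \;=\; \sum_{\ul{a}\in A\setminus\{\ul{0}\}} k_{\ul{a}}\, \langle \ul{a},\ul{\mathfrak{n}}\rangle \;\geq\; u\,\delta_A,
\]
so $u \leq \langle\ul{p},\ul{\mathfrak{n}}\rangle/\delta_A = |\langle\ul{p},\ul{\mathfrak{n}}\rangle|/\delta_A$. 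Therefore, whenever $h\geq |\langle\ul{p},\ul{\mathfrak{n}}\rangle|/\delta_A$, every representation is accounted for by $\rho_{A,h}(\ul{p})$ (after padding with $\ul{0}$'s), which gives $\rho_{A,h}(\ul{p}) = \rho_A(\ul{p})$.

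The argument is essentially routine once $\ul{\mathfrak{n}}$ is recognized as the right linear functional to play the role of the ordering on $\Z$; there is no substantive obstacle, as the inequality $\langle\ul{a},\ul{\mathfrak{n}}\rangle\geq \delta_A$ for $\ul{a}\in A\setminus\{\ul{0}\}$ is built into the definition \eqref{defd}.
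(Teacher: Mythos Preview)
Your proof is correct and essentially identical to the paper's: both note the monotonicity $\rho_{A,1}(\ul{p})\leq \rho_{A,2}(\ul{p})\leq\cdots$ from $\ul{0}\in A$, then pair an arbitrary representation with $\ul{\mathfrak{n}}$ to bound the number of nonzero summands by $|\langle\ul{p},\ul{\mathfrak{n}}\rangle|/\delta_A$. Your added remarks on the sign of $\ul{\mathfrak{n}}$ and the analogy with Lemma~\ref{AheqA} are helpful clarifications but do not change the argument.
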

 \begin{proof}
  Since $\ul{0}\in A$, we have $r_{A,1}(\ul{p}) \leq r_{A,2}(\ul{p}) \leq \cdots \leq r_A(\ul{p})$ for every fixed $\ul{p}\in \mathcal{C}_A\cap\Lambda_A$. Let $\ul{p} = \sum_{\ul{a}\in A\setminus\{0\}} k_{\ul{a}} \ul{a}$ (with $k_{\ul{a}}\in\Z_{\geq 0}$) be an arbitrary representation of $\ul{p}$ by $A$. Since $\langle \,\cdot\,,\ul{\mathfrak{n}} \rangle$ is linear, we have 
  \[ \langle \ul{p},\ul{\mathfrak{n}} \rangle = \sum_{\ul{a}\in A\setminus\{0\}} k_{\ul{a}} \langle \ul{a},\ul{\mathfrak{n}} \rangle \geq \delta_{A} \sum_{\ul{a}\in A\setminus\{0\}} k_{\ul{a}}, \]
  and thus $\sum_{\ul{a}\in A\setminus\{0\}} k_{\ul{a}} \leq \langle \ul{p},\ul{\mathfrak{n}} \rangle/\delta_A$. So every possible representation of $\ul{p}$ by $A$ is taken into account once $h\geq \langle \ul{p},\ul{\mathfrak{n}} \rangle/\delta_A$.
 \end{proof}

 Fix $t\geq 1$, and define
 \[ H_A = H_{A,t} := \left\lceil\frac{\Delta_{A}}{\delta_{A}} \varphi_{A,t}\right\rceil\quad \bigg(\text{resp. } H_{\ul{v}-A} := \left\lceil\frac{\Delta_{\ul{v}-A}}{\delta_{\ul{v}-A}} \varphi_{\ul{v}-A,t}\right\rceil, \text{ for }\ul{v}\in \ex(\cH(A))\bigg), \]
 where $\varphi_{A,t}$ (resp. $\varphi_{\ul{v}-A}$) is the constant from Lemma \ref{mainlm}. Thus, by Lemma \ref{rhostab},
 \begin{equation}
  (\varphi_{A,t} \cH(A) \cap \Lambda_A)\setminus \mathcal{E}_t(A) \subseteq (H_A A)^{(t)} \label{jej}
 \end{equation}
 (resp. $(\varphi_{\ul{v}-A,t} \cH(\ul{v}-A) \cap \Lambda_A)\setminus \mathcal{E}_t(\ul{v}-A) \subseteq (H_{\ul{v}-A} (\ul{v}-A))^{(t)}$). More generally: 

 \begin{lem}\label{sttpnt}
  For each $\ul{v}\in\mathrm{ex}(\cH(A))$ and $h\geq H_{\ul{v}-A}$, we have
  \[ \big(G_{\ul{v}}\,\ul{v} + (h -G_{\ul{v}})\cH(A)\big) \cap \Lambda_A\setminus \big(h\ul{v} - \mathcal{E}_t(\ul{v}-A)\big) \subseteq (hA)^{(t)}, \] 
  where $G_{\ul{v}} = G_{\ul{v},t} := H_{\ul{v}-A} - \varphi_{\ul{v}-A,t}$.
 \end{lem}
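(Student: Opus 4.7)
The plan is to reflect through the vertex $\ul{v}$ and reduce the desired inclusion to the analogous containment for $(h(\ul{v}-A))^{(t)}$, which has already been prepared by Lemma \ref{mainlm} and by \eqref{jej}. As a first step, note that $\mathrm{ex}(\cH(A))\subseteq A$ (extremal points of the convex hull of a finite set always belong to the set), so $\ul{v}\in A$; this makes $\Lambda_A = \Lambda_{\ul{v}-A}$, and the bijection $\ul{a}\mapsto \ul{v}-\ul{a}$ between $A$ and $\ul{v}-A$ induces a bijection between representations: if $\ul{p} = \sum k_{\ul{a}}\ul{a}$ with $\sum k_{\ul{a}} = h$, then $h\ul{v}-\ul{p} = \sum k_{\ul{a}}(\ul{v}-\ul{a})$. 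Hence $\rho_{A,h}(\ul{p}) = \rho_{\ul{v}-A,h}(h\ul{v}-\ul{p})$, giving the reflection identity $(hA)^{(t)} = h\ul{v} - (h(\ul{v}-A))^{(t)}$.

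In view of this identity, it suffices to show that $h\ul{v}-\ul{p} \in (h(\ul{v}-A))^{(t)}$ for every $\ul{p}$ in the LHS. For such a $\ul{p}$, I will write $\ul{p} = C_{\ul{v}}\ul{v} + (h-C_{\ul{v}})\ul{x}$ with $\ul{x}\in\cH(A)$, so that
\[ h\ul{v} - \ul{p} \,=\, (h-C_{\ul{v}})(\ul{v}-\ul{x}) \,\in\, (h-C_{\ul{v}})\cH(\ul{v}-A). \]
Combining this with $\ul{p}\in\Lambda_A = \Lambda_{\ul{v}-A}$ and with the hypothesis $\ul{p}\notin h\ul{v}-\mathcal{E}_t(\ul{v}-A)$, which rearranges to $h\ul{v}-\ul{p}\notin \mathcal{E}_t(\ul{v}-A)$, I land at
\[ h\ul{v}-\ul{p} \,\in\, \big((h-C_{\ul{v}})\cH(\ul{v}-A)\cap\Lambda_A\big)\setminus \mathcal{E}_t(\ul{v}-A). \]

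Finally, since $h-C_{\ul{v}} = \varphi_{\ul{v}-A,t} + (h-H_{\ul{v}-A})$ with $k := h - H_{\ul{v}-A}\geq 0$, the iterated form of Lemma \ref{mainlm} (equation \eqref{fork}) applied to $\ul{v}-A$ yields
\[ \big((h-C_{\ul{v}})\cH(\ul{v}-A)\cap\Lambda_A\big)\setminus\mathcal{E}_t(\ul{v}-A) \,=\, \big((\varphi_{\ul{v}-A,t}\cH(\ul{v}-A)\cap\Lambda_A)\setminus\mathcal{E}_t(\ul{v}-A)\big) + k(\ul{v}-A). \]
By \eqref{jej} applied to $\ul{v}-A$, the first summand on the right sits inside $(H_{\ul{v}-A}(\ul{v}-A))^{(t)}$; adding $k$ elements of $\ul{v}-A$ then lifts the sum into $((H_{\ul{v}-A}+k)(\ul{v}-A))^{(t)} = (h(\ul{v}-A))^{(t)}$, using the elementary $\Z^d$-analog $(\ul{v}-A) + (r(\ul{v}-A))^{(t)} \subseteq ((r+1)(\ul{v}-A))^{(t)}$ of the fact invoked in the proof of Lemma \ref{Fpm}. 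This will close the argument. No substantial obstacle is anticipated: the only care needed is the bookkeeping that matches $h - C_{\ul{v}}$ against the iterate $\varphi_{\ul{v}-A,t}+k$ of Lemma \ref{mainlm}, and verifying that the reflection swaps $\mathcal{E}_t$ and $h\ul{v}-\mathcal{E}_t$ correctly.
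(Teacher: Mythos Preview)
Your proof is correct and follows essentially the same approach as the paper: both apply \eqref{fork} and \eqref{jej} to $\ul{v}-A$ with $k=h-H_{\ul{v}-A}$, use the reflection identity $(h(\ul{v}-A))^{(t)}=h\ul{v}-(hA)^{(t)}$, and translate via $\cH(\ul{v}-A)=\ul{v}-\cH(A)$. Your version is simply more explicit, working pointwise and spelling out why $\Lambda_A=\Lambda_{\ul{v}-A}$ and why the reflection identity holds, whereas the paper states the containment at the level of sets and takes these facts for granted.
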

 \begin{proof}
  Applying Lemma \ref{mainlm} to $\lambda = \varphi_{\ul{v}-A,t} + k$ together with \eqref{jej}, we get
  \begin{align*}
   \big((\varphi_{\ul{v}-A,t}+k)\cH(\ul{v}-A)\cap \Lambda_A\big) &\setminus \mathcal{E}_t(\ul{v}-A) \\
   &= \big(\varphi_{\ul{v}-A,t}\cH(\ul{v}-A)\cap \Lambda_A\big) \setminus \mathcal{E}_t(\ul{v}-A) + k(\ul{v}-A) \\
   &\subseteq (H_{\ul{v}-A}(\ul{v}-A))^{(t)} + k(\ul{v}-A) \\
   &\subseteq ((H_{\ul{v}-A}+k) (\ul{v}-A))^{(t)}
  \end{align*}  
  for every integer $k\geq 0$. Taking $k := h-H_{\ul{v}-A}$, it follows that
  \begin{align*}
   \big((h -(H_{\ul{v}-A} - \varphi_{\ul{v}-A,t}))\cH(\ul{v}-A)\big)\cap \Lambda_A \setminus \mathcal{E}_t(\ul{v}-A) &\subseteq (h(\ul{v}-A))^{(t)} \\
   &= h\ul{v}- (hA)^{(t)}.
  \end{align*}
  Using that $\cH(\ul{v}-A) = \ul{v} - \cH(A)$, we have
  \[ \big(h\ul{v} - G_{\ul{v}}\ul{v} - (h - G_{\ul{v}})\cH(A)\big)\cap \Lambda_A \setminus \mathcal{E}_t(\ul{v}-A) \subseteq h\ul{v}- (hA)^{(t)}. \]
  Subtracting both sides from $h\ul{v}$ proves the lemma.
 \end{proof}
 
 Using this, we can show that certain parts of the structured set in \eqref{strzd} do belong to $(hA)^{(t)}$. 
 
 \begin{lem}\label{eachB}
  Let $\ul{0}\in B\subseteq \ex(\cH(A))$ be a subset such that $B\setminus \{\ul{0}\}$ is linearly independent and $|B| = d+1$. Then,
  \[ (h \cH(B) \cap \Lambda_{A}) \setminus \bigg(\mathcal{E}_t(A)\cup \bigcup_{\ul{b}\in B\setminus\{\ul{0}\}}\, (h\ul{b} - \mathcal{E}_t(\ul{b}-A)) \bigg) \subseteq (hA)^{(t)} \]
  for every $h \geq H_{A} + \sum_{\ul{b}\in B\setminus\{\ul{0}\}} H_{\ul{b} - A}$.
 \end{lem}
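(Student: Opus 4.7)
My plan is to reduce the statement to a barycentric pigeonhole on the simplex $\cH(B)$, matching each point $\ul{p}\in h\cH(B)\cap \Lambda_A$ to a corner where either Lemma \ref{sttpnt} or its counterpart at $\ul{0}$ (obtained from \eqref{fork} combined with \eqref{jej}) applies. The key structural input is that $B\setminus\{\ul{0}\}$ is a basis of $\R^d$ (since $|B|=d+1$ and $\mathrm{span}_{\R}(B)=\R^d$), so every $\ul{p}\in h\cH(B)$ has unique nonnegative barycentric coordinates with respect to $B$.

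First I would set $C_{\ul{0}} := H_A - \varphi_{A,t}$ and $C_{\ul{b}} := H_{\ul{b}-A} - \varphi_{\ul{b}-A,t}$ for $\ul{b}\in B\setminus\{\ul{0}\}$, to match the constants in Lemma \ref{sttpnt}. Since each $\varphi_{\cdot,t}\geq 1$, the hypothesis yields
\[ \sum_{\ul{v}\in B} C_{\ul{v}} \;\leq\; H_A + \sum_{\ul{b}\in B\setminus\{\ul{0}\}} H_{\ul{b}-A} \;\leq\; h. \]
For $\ul{p}\in h\cH(B)\cap \Lambda_A$, write $\ul{p} = \sum_{\ul{v}\in B} hc_{\ul{v}}\,\ul{v}$ with $c_{\ul{v}}\geq 0$ and $\sum_{\ul{v}} c_{\ul{v}} = 1$; since $\sum_{\ul{v}} hc_{\ul{v}} = h \geq \sum_{\ul{v}} C_{\ul{v}}$, pigeonhole gives some $\ul{v}^*\in B$ with $hc_{\ul{v}^*}\geq C_{\ul{v}^*}$. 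Rewriting $\ul{p} = C_{\ul{v}^*}\ul{v}^* + \ul{p}'$ with $\ul{p}' := (hc_{\ul{v}^*}-C_{\ul{v}^*})\ul{v}^* + \sum_{\ul{v}\neq \ul{v}^*} hc_{\ul{v}}\ul{v}$, the residue $\ul{p}'$ has nonnegative barycentric coefficients summing to $h-C_{\ul{v}^*}$, so $\ul{p}'\in (h-C_{\ul{v}^*})\cH(B)\subseteq (h-C_{\ul{v}^*})\cH(A)$; hence $\ul{p}\in C_{\ul{v}^*}\ul{v}^*+(h-C_{\ul{v}^*})\cH(A)$.

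Next I would split on whether $\ul{v}^*=\ul{0}$. If so, $\ul{p}\in (h-C_{\ul{0}})\cH(A)\cap\Lambda_A$ and the hypothesis excludes $\ul{p}\in\mathcal{E}_t(A)$, so applying \eqref{fork} with $k := h - H_A\geq 0$ together with \eqref{jej} places $\ul{p}$ in $(hA)^{(t)}$. Otherwise $\ul{v}^*\in B\setminus\{\ul{0}\}$, and $\ul{p}\in (C_{\ul{v}^*}\ul{v}^*+(h-C_{\ul{v}^*})\cH(A))\cap\Lambda_A$ with $h\geq H_{\ul{v}^*-A}$ (a consequence of the lower bound on $h$); the hypothesis excludes $\ul{p}\in h\ul{v}^*-\mathcal{E}_t(\ul{v}^*-A)$, so Lemma \ref{sttpnt} gives $\ul{p}\in(hA)^{(t)}$.

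The main obstacle I anticipate is organizational rather than technical: I need to confirm that the exceptional sets excised in the hypothesis ($\mathcal{E}_t(A)$ together with each $h\ul{b}-\mathcal{E}_t(\ul{b}-A)$ for $\ul{b}\neq \ul{0}$) are precisely the ones required to invoke the corner statement selected by the pigeonhole, with no additional slack. By design the pairing is exact, since $\mathcal{E}_t(A)$ handles the case $\ul{v}^*=\ul{0}$ (via \eqref{fork}+\eqref{jej}) and each $h\ul{b}-\mathcal{E}_t(\ul{b}-A)$ handles $\ul{v}^*=\ul{b}$ (via Lemma \ref{sttpnt}).
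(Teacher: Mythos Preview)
Your proof is correct and follows essentially the same approach as the paper's: both arguments assign a corner of $B$ to each point $\ul{p}$ and then invoke Lemma \ref{sttpnt} (or equivalently \eqref{fork}+\eqref{jej} at $\ul{0}$) at that corner. The paper case-splits on whether some non-zero coordinate $p_i$ exceeds $C_{\ul{b}_i}$ and otherwise falls back to the corner $\ul{0}$, whereas you phrase the same dichotomy as a single barycentric pigeonhole over all $d+1$ corners; this makes your write-up slightly more symmetric, but the content is identical.
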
 %Idea: work with the \varphis to make ``1''s like in the d=1 case
 \begin{proof}
  Write $B = \{\ul{0},\ul{b}_1,\ldots, \ul{b}_d\}$, and $(x_1,\ldots,x_{d})_B := \sum_{i=1}^{d} x_i\ul{b}_i$. Take
  \begin{equation}
   \ul{p}\in (h \cH(B) \cap \Lambda_{A}) \setminus \bigg(\mathcal{E}_t(A)\cup \bigcup_{i=1}^{d}\, (h\ul{b}_i - \mathcal{E}_t(\ul{b}_i-A)) \bigg) \label{whichp}
  \end{equation}
  so that $\ul{p} = (p_1,\ldots,p_d)_B$ for some $\ul{p}_i\in \R_{\geq 0}$.
  
  Let $h\geq \max_{\ul{b}\in B\setminus\{\ul{0}\}} H_{\ul{b}-A}$. If there is $1\leq i\leq d$ such that $p_i \geq H_{\ul{b}_i - A} - \varphi_{\ul{b}_i - A,t}$, then
  \begin{equation*}
   \ul{p} \in (H_{\ul{b}_i-A} - \varphi_{\ul{b}_i-A,t})\,\ul{b}_i + (h -(H_{\ul{b}_i-A} - \varphi_{\ul{b}_i-A,t}))\cH(B),
  \end{equation*}
  which by Lemma \ref{sttpnt} implies that $\ul{p}\in (hA)^{(t)}$.
  
  Otherwise, $\ul{p}$ is such that $0\leq p_i \leq H_{\ul{b}_i - A} - \varphi_{\ul{b}_i - A,t}$ for every $1\leq i\leq d$. For each $i$, if $h\geq  H_{\ul{b}_i - A} - \varphi_{\ul{b}_i - A,t}$ then $ j\ul{b}_i \in h\cH(B)$ for every $0\leq j\leq H_{\ul{b}_i - A} - \varphi_{\ul{b}_i - A,t}$. Therefore, if $h\geq \sum_{i=1}^{d} (H_{\ul{b}_i - A} - \varphi_{\ul{b}_i - A,t})$, then
  \[ \ul{p} \in \{(x_1,\ldots, x_d)_B ~|~ 0\leq x_i \leq H_{\ul{b}_i - A} - \varphi_{\ul{b}_i - A,t},\, x_i\in \R  \}\cap \Lambda_A \subseteq h\cH(B) \cap \Lambda_A. \]
  Thus, for $h\geq H_A + \sum_{i=1}^{d} H_{\ul{b}_i - A}$, Lemma \ref{sttpnt} (applied with $\ul{v}=0$) guarantees that every $\ul{p}$ in \eqref{whichp} is in $(hA)^{(t)}$.
 \end{proof}
 
%  Applying Lemma \ref{eachB} to $\ul{v}-B$, it follows immediately that:
%  
%  \begin{cor}\label{eachBv}
%   Fix $\ul{v}\in\mathrm{ex}(\cH(A))$. Then, for each subset $\ul{v} \in B\subseteq \ex(\cH(A))$ with $|B| = d+1$ such that $\mathrm{span}_{\R}(\ul{v}-B) = \R^d$, we have
%   \[ \big(h (\ul{v} - \cH(\ul{v}-B)) \cap \Lambda_{A}\big) \setminus \bigg((h\ul{v} - \mathcal{E}_t(\ul{v}-A))\cup \bigcup_{\ul{b}\in B\setminus\{\ul{v}\}}\, (h\ul{b} - \mathcal{E}_t(\ul{b}-A)) \bigg) \subseteq (hA)^{(t)} \]
%   for every $h \geq H_{\ul{v}-A} + \sum_{\ul{b}\in B\setminus\{\ul{v}\}} H_{\ul{b} - A}$.
%  \end{cor}

 Finally, to glue together all the parts of the structured set as they appear in Lemma \ref{eachB}, we use the following version of a classical result in convex geometry:%\footnote{Which is a consequence of Lemma 3.6 of Granville--Smith--Walker \cite{grasmiwal24}.}
 
 \begin{lem}[Carath\'eodory's theorem]\label{caratlm}
  We have
  \[ \cH(A) = \bigcup_{\substack{\ul{0}\,\in\, B\, \subseteq\, \mathrm{ex}(\cH(A)) \\ |B|\,=\,d+1 \\ \mathrm{span}_{\R}(B) \,=\, \R^d}}\ \cH(B). \]
 \end{lem}
  
%  \begin{xrem}
%   In its usual formulation, Carath\'eodory's theorem says that
%   \[ \cH(A) = \bigcup_{\ul{v}\,\in\, \mathrm{ex}(\cH(A))}\, \bigcup_{\substack{\ul{v}\,\in\, B\, \subseteq\, \mathrm{ex}(\cH(A)) \\ |B|\,=\,d+1 \\ \mathrm{span}_{\R}(\ul{v}-B) \,=\, \R^d}}\ \ul{v}- \cH(\ul{v}-B); \]
%   that is, every point is a linear combination of 
%  \end{xrem}
 
 \begin{proof}
  We adapt the proof of Granville--Shakan \cite[Lemma 4]{grasha20}. Since $\cH(\ex(\cH(A))) = \cH(A)$,\footnote{cf. Br{\o}ndsted \cite[Theorem 7.2]{brond83}.} we can suppose without loss of generality that $A = \ex(\cH(A))$. For any $\ul{v}\in \cH(A)$, we can represent it as $\ul{v} = \sum_{\ul{a}\in A} c_{\ul{a}} \ul{a}$ for some real $c_{\ul{a}} \geq 0$ such that
  \[ 0\leq \sum_{\ul{a}\in A} c_{\ul{a}} \leq 1. \]
  Define $B := \{\ul{a}\in A ~|~ c_{\ul{a}} > 0\}$, and select a representation $(c_{\ul{a}})_{\ul{a}\in A}$ that minimizes $|B|$. We claim that $B$ is a set of linearly independent elements.
  
  Indeed, suppose the contrary; i.e., that $\sum_{\ul{b}\in B} e_{\ul{b}} \ul{b} = 0$ for some set of $e_{\ul{b}} \in \R$ with not all $e_{\ul{b}} =0$. Without loss of generality, suppose that $\sum_{\ul{b}\in B} e_{\ul{b}} \geq 0$ (otherwise, change the sign of each $e_{\ul{b}}$). Hence, there exists at least one element of $\ul{b} \in B$ such that $e_{\ul{b}} > 0$. Define
  \[ m := \min_{\substack{\ul{b}\in B \\ e_{\ul{b}} >0}}\ c_{\ul{b}}/e_{\ul{b}}, \]
  so that $c_{\ul{b}^{*}} = m e_{\ul{b}^{*}}$ for some $\ul{b}^{*} \in B$, and $0 \leq \sum_{\ul{b}\in B} me_{\ul{b}} \leq \sum_{\ul{b}\in B} c_{\ul{b}}$. But then $\ul{v} = \sum_{\ul{b}\in B} (c_{\ul{b}} - me_{\ul{b}}) \ul{b}$, where $c_{\ul{b}} - me_{\ul{b}} \geq 0$ for every $\ul{b}\in B$, $0 \leq \sum_{\ul{b}\in B} (c_{\ul{b}} - me_{\ul{b}}) \leq 1$, and $c_{\ul{b}^{*}} - me_{\ul{b}^{*}} = 0$. This contradicts the minimality of $|B|$.
  
  Since we can then adjoin elements from $A$ to $B$ until we have $d$ independent non-zero elements, this concludes the proof.
 \end{proof}

 \begin{proof}[Proof of Theorem \ref{ZdFrobt}]
  By Lemma \ref{caratlm}, we have $h\cH(A) = \bigcup_{B} h\cH(B)$. Thus, taking $h \geq H_A + \max_{B} \sum_{\ul{b}\in B\setminus\{\ul{0}\}} H_{\ul{b} - A}$ we may apply Lemma \ref{eachB} to each $B$, obtaining the inclusion ``$\supseteq$'' in \eqref{strzd}. The inclusion ``$\subseteq$'' follows by definition.
 \end{proof}

 \begin{rem}[Analogy to the case $d=1$]\label{remana}
  Let
  \[ A=\{0 = a_0 < a_1 <\cdots < a_{\ell} < a_{\ell+1} = m\} \subseteq \Z \]
  be a set with $\gcd(A) = 1$. In this case, $\Lambda_A=\Z$, $\delta_A = a_1$, $\Delta_A= m$, $\cH(A) = [0,m]$, and $\mathrm{ex}(\cH(A)) = \{0,m\}$. By the definition of $\Fr_t(A)$, writing $F := (\Fr_t(A)+m)/m$, we have $\mathcal{E}_t(A) \subseteq \{0,\ldots,(F-1)m\}$, and so
  \[ \{\Fr_t(A)+1,\ldots,\Fr_t(A)+m\} \subseteq (F\cH(A)\cap \Lambda_A )\setminus \mathcal{E}_t(A). \]
  Thus, for every real $\lambda \geq F$, we have
  \begin{align*}
   (\lambda \cH(A)\cap \Lambda_A )\setminus \mathcal{E}_t(A) + A &= \{0,\ldots,\lfloor\lambda m\rfloor\} \cap \mathcal{P}_t(A) + A \\
   &= \{0,\ldots, \lfloor\lambda m\rfloor + m\} \cap \mathcal{P}_t(A) \\
   &= ((\lambda+1)\cH(A)\cap \Lambda_A )\setminus \mathcal{E}_t(A),
  \end{align*}
  which implies $\varphi_{A,t} \leq F$ as in Lemma \ref{mainlm}. Applying Theorem \ref{ZdFrobt}, we obtain that $(hA)^{(t)}$ is structured for every $h\geq h_t(A)$, with
  \begin{align*}
   h_t(A) &\leq \left\lceil\frac{\Delta_{A}}{\delta_{A}}\, \varphi_{A,t}\right\rceil + \left\lceil\frac{\Delta_{m-A}}{\delta_{m-A}}\, \varphi_{m-A,t}\right\rceil = \left\lceil\frac{\Fr_t(A)+m}{a_1}\right\rceil + \left\lceil\frac{\Fr_t(m-A) + m}{m-a_{\ell}}\right\rceil.
  \end{align*}
%   As in the proof of Theorem \ref{MT2}, we have 
%   \[ \frac{\Fr_t(A)+m}{a_1} + \frac{\Fr_t(m-A) + m}{m-a_{\ell}} < C_{A,t}\frac{1}{e}m\ell t^{1/\ell} \]
%   Therefore, Theorem \ref{ZdFrobt} yields the bound $h_{t}(A) \leq C'_{A,t} \frac{1}{e}m\ell t^{1/\ell}$, where $C'_{A,t} := C_{A,t} + 3/\ell t^{1/\ell}$. In particular, $C'_{A,t}\leq 1+o(1)$ as $\ell\to\infty$, $t^{1/\ell}\to\infty$.
 \end{rem}

%%%%%%%%%%%%%%%%%%%%%%%%%%%%%%%%%%
\subsection{Proof of Lemma \ref{mainlm}}
 Given $\ub{u} = (u_1,\ldots,u_d)$, $\ub{v} = (v_1,\ldots,v_d) \in \Z_{\geq 0}^{d}$, write $\ub{u}\leq \ub{v}$ is $u_i\leq v_i$ for every $1\leq i\leq d$. We will use the following classical lemma.
 
 \begin{lem}[Dickson's lemma]\label{mannlm}
  Let $S\subseteq \Z_{\geq 0}^{d}$. There is a finite subset $T\subseteq S$ such that for all $\ub{s}\in S$, there exists $\ub{t}\in T$ such that $\ub{t}\leq \ub{s}$.
 \end{lem}
 \begin{proof}
  See Lemma 5 in Granville--Shakan \cite{grasha20}.
 \end{proof}
 
 The next lemma is based on Proposition 4 of Granville--Shakan \cite{grasha20}.
 
 \begin{lem}\label{granlem}
  Let $\ul{0}\in B\subseteq \ex(\cH(A))$ be a subset such that $B\setminus \{\ul{0}\}$ is linearly independent and $|B| = d+1$. Then, there exists a finite set $F_B = F_{B,t}(A)\subseteq \mathcal{P}_t(A)$ such that
  \[ \mathcal{P}_t(A) \cap \mathcal{C}_B = F_B + \mathcal{P}(B) \]
 \end{lem}
 \begin{proof}
  The fundamental domain for $\Lambda_B$ is
  \[ \R^d/\Lambda_B \simeq \mathcal{F} := \bigg\{\sum_{\ul{b}\in B} s_{\ul{b}} \ul{b} \ \,\bigg|\ s_{\ul{b}} \in [0,1),\ \forall \ul{b}\in B \bigg\}. \]
  Since $\mathcal{F}$ is bounded, the set $\mathcal{F} \cap \Lambda_A$ is finite. Thus, we can partition $\mathcal{P}_t(A) \cap \mathcal{C}_{B}$ as
  \[ \mathcal{P}_t(A) \cap \mathcal{C}_B = \bigsqcup_{\ul{v}\in \mathcal{F} \cap \Lambda_A} (\ul{v}+\Lambda_B) \cap \mathcal{P}_t(A). \]
  
  For each $\ul{v}\in \mathcal{F} \cap \Lambda_A$, define
  \[ S_{\ul{v}} := \bigg\{ \ub{s} = (s_{\ul{b}})_{\ul{b}\in B\setminus\{\ul{0}\}} \in \Z_{\geq 0}^{d} \ \,\bigg|\ \ul{v} + \sum_{\ul{b}\in B\setminus\{\ul{0}\}} s_{\ul{b}} \ul{b} \in (\ul{v}+\Lambda_B) \cap \mathcal{P}_t(A)  \bigg\}. \]
  By Dickson's Lemma \ref{mannlm}, there is a finite subset $T_{\ul{v}}\subseteq S_{\ul{v}}$ such that for every $\ub{s}\in S_{\ul{v}}$, there is $\ub{t}\in T_{\ul{v}}$ such that $\ub{t}\leq \ub{s}$. Define the finite set
  \[ F_{\ul{v}} := \bigg\{ \ul{v} + \sum_{\ul{b}\in B\setminus\{\ul{0}\}} t_{\ul{b}} \ul{b} \ \,\bigg|\ \ub{t} = (t_{\ul{b}})_{\ul{b}\in B\setminus\{\ul{0}\}} \in T_{\ul{v}} \bigg\}. \]
  By definition, for every $\ul{p} \in (\ul{v}+\Lambda_B) \cap \mathcal{P}_t(A)$ there is $\ul{q}\in F_{\ul{v}}$ such that $\ul{p} - \ul{q} \in \mathcal{P}(B)$. Hence, $(\ul{v}+\Lambda_B) \cap \mathcal{P}_t(A) = F_{\ul{v}} + \mathcal{P}(B)$. Taking the union over $\ul{v}$'s, we conclude that
  \begin{align*}
   \mathcal{P}_t(A) \cap \mathcal{C}_B &= \bigsqcup_{\ul{v}\in \mathcal{F} \cap \Lambda_A} \big(F_{\ul{v}} + \mathcal{P}(B) \big) = F_{B} + \mathcal{P}(B),
  \end{align*}
  where $F_{B} := \bigsqcup_{\ul{v}\in \mathcal{F} \cap \Lambda_A} F_{\ul{v}}$.
 \end{proof}
 
 \begin{cor}\label{grancor}
  Let $\ul{0}\in B\subseteq \ex(\cH(A))$ be a subset such that $B\setminus \{\ul{0}\}$ is linearly independent and $|B| = d+1$, and let $\mu_{B} := \min\{\mu\in\R_{\geq 1} ~|~ F_B \subseteq \mu \cH(B)\}$, where $F_B$ is as in Lemma \ref{granlem}. Then, for real $\lambda \geq \mu_{B}$, we have
  \[ \mathcal{P}_t(A) \cap (\lambda +1)\mathcal{H}(B) = (\mathcal{P}_t(A) \cap \lambda\cH(B)) + B. \]
 \end{cor}
 \begin{proof}
  The inclusion ``$\supseteq$'' is clear. Since $\ul{0}\in B$, it suffices to show that for
  \[ \ul{p} \in \mathcal{P}_t(A) \cap (\lambda+1)\cH(B) \setminus \lambda\cH(B) \]
  there exists $\ul{b} \in B$ such that $\ul{p}-\ul{b}\in \mathcal{P}_t(A) \cap \lambda\cH(B)$.
 
  By Lemma \ref{granlem} we have $\mathcal{P}_t(A) \cap \lambda \cH(B) = (\mathcal{P}_t(A) \cap \mathcal{C}_B) \cap \lambda \cH(B) = (F_B + \mathcal{P}(B)) \cap \lambda \cH(B)$, so we can write $\ul{p} = \ul{f} + \sum_{\ul{b}\in B\setminus\{\ul{0}\}} c_{\ul{b}}\ul{b}$ for some $\ul{f}\in F_B$ and some integers $c_{\ul{b}} \in\Z_{\geq 0}$, with at least one (say, $\ul{b}^{*} \in B$) $c_{\ul{b}^{*}} \geq 1$, because $\lambda \geq \mu_B$. Thus
  \[ \ul{p}-\ul{b}^{*} = \ul{f} + \sum_{\ul{b}\in B\setminus\{\ul{0}\}} c_{\ul{b}}\ul{b} - \ul{b}^{*} \in F_{B}+\mathcal{P}(B) = \mathcal{P}_t(A)\cap \mathcal{C}_{B}, \]
  so it suffices to show that $\ul{p}-\ul{b}^{*}\in \lambda\cH(B)$. 

  Since $B\setminus\{\ul{0}\}$ is a basis for $\R^{d}$ and $\ul{p} \in (\lambda+1) \cH(B)\setminus\lambda\cH(B) \subseteq \mathcal{C}_B$, there exist unique $x_{\ul{b}}\in\R$, necessarily non-negative, such that $\ul{p} = \sum_{\ul{b}\in B\setminus\{0\}} x_{\ul{b}} \ul{b}$ and $\lambda < \sum_{\ul{b}\in B\setminus\{0\}} x_{\ul{b}} \leq \lambda+1$. Since $\ul{p}-\ul{b}^{*}\in \mathcal{C}_B$, there exist unique $y_{\ul{b}}\in \R$, necessarily non-negative, such that $\sum_{\ul{b}\in B\setminus\{0\}} y_{\ul{b}} \ul{b} = \ul{p}-\ul{b}^{*}$. By uniqueness, $y_{\ul{b}} = x_{\ul{b}}$ for $\ul{b}\in B\setminus\{\ul{b}^{*}\}$, and $y_{\ul{b}^{*}} = x_{\ul{b}^{*}} - 1$. Therefore, $\sum_{\ul{b}\in B\setminus\{0\}} y_{\ul{b}} \leq \lambda$, implying that $\ul{p}-\ul{b}^{*}\in \lambda\cH(B)$.
 \end{proof}
 
 \begin{proof}[Proof of Lemma \ref{mainlm}]
  Applying Lemma \ref{caratlm} together with Corollary \ref{grancor}, we obtain, for $\lambda \geq \max_{B} \mu_{B}$,
  \begin{align}
   \mathcal{P}_t(A) \cap (\lambda+1)\cH(A) &= \bigcup_{B}\ \mathcal{P}_t(A) \cap (\lambda+1)\cH(B) \nonumber \\
   &= \bigcup_{B}\ B+ (\mathcal{P}_t(A) \cap \lambda\cH(B)) \nonumber \\
   &\subseteq A+ \bigcup_{B}\ \mathcal{P}_t(A) \cap \lambda\cH(B) = A+(\mathcal{P}_t(A) \cap \lambda\cH(A)), \nonumber
  \end{align}
  where $\ul{0}\in B\subseteq \ex(\cH(A))$ runs through subsets such that $B\setminus \{\ul{0}\}$ is linearly independent and $|B| = d+1$. Since the reverse inclusion is trivial, writing $\mathcal{P}_t(A)\cap \lambda \cH(A) = (\lambda\cH(A)\cap\Lambda_A)\setminus\mathcal{E}_t(A)$ finishes the proof.
 \end{proof}
 %%%%%%%%%%%%%%%%%%%%%%%%%%%%%%%%%%%%%%%%%%
 
 \subsection{Proof of Theorem \ref{tKhov}}
 We generalize the method of Nathanson--Rusza \cite{natruz02}. Let $A = \{\ul{a}_{1},\ldots,\ul{a}_{\ell}\}\subseteq \Z^{d}$ be a finite subset of $\Z^d$. Define the set ${\widetilde{A}} = \{\ul{\alpha}_1,\ldots,\ul{\alpha}_{\ell}\} \subseteq \Z^{d+1}$ where each $\ul{\alpha}_j := (\ul{a}_j, 1)$. Define the projection
 \begin{align*}
  \pi = \pi_{\widetilde{A}}: \Z^{\ell} &\longrightarrow \Z^{d+1} \\
  \ub{u} &\longmapsto \sum_{i=1}^{\ell} {u}_i \ul{\alpha}_i,
 \end{align*}
 so that
 \begin{equation*}
  (hA)^{(t)} = \big\{ \ul{p}\in \Z^{d} ~\big|~ |\pi^{-1}(\ul{p},h)| \geq t \big\}. %\label{hatpi1}
 \end{equation*}
 We define two orders in $\Z^{\ell}$:\smallskip
 \begin{itemize}
  \item $\ub{u} \leq \ub{v}$ if $\ub{v}-\ub{u} \in \Z_{\geq 0}^{\ell}$.\smallskip
  
  \item $\ub{u} \leq_{\mathrm{lex}} \ub{v}$ if there is $1\leq j\leq d$ such that $u_1=v_1$, $\ldots$, $u_{j-1} = v_{j-1}$, $u_{j}\leq v_{j}$. Note that this is a total ordering, and $\ub{u}\leq \ub{v}$ implies $\ub{u} \leq_{\mathrm{lex}} \ub{v}$ but not vice versa.\smallskip
 \end{itemize}
 
 %Definitions
 For each $t\geq 1$, write
 \begin{equation}
  \mathcal{U}^{(t)} = \mathcal{U}_{\widetilde{A}}^{(t)} := \{ \ub{z}\in \Z^{\ell}_{\geq 0} ~|~ \exists \ub{w}_1,\ldots,\ub{w}_t\neq \ub{z} \text{ s.t. } \ub{w}_i<_{\mathrm{lex}} \ub{z},\ \ub{z}-\ub{w}_i \in \mathrm{ker}\,\pi\}, \label{Uideal}
 \end{equation}
 for the set of \emph{$t$-useless elements}, so that
 \begin{equation} 
 \begin{aligned}
  |(hA)^{(t)}| &= \#\{ \ub{u}\in \Z_{\geq 0}^{\ell} ~|~ \ub{u} \notin \mathcal{U}^{(t)} \text{ and } (\pi(\ub{u}))_{d+1} = h\} \ -\\
  &\hspace{5em} \#\{ \ub{u}\in \Z_{\geq 0}^{\ell} ~|~ \ub{u} \notin \mathcal{U}^{(t-1)} \text{ and } (\pi(\ub{u}))_{d+1} = h\}.
 \end{aligned}\label{hAt}
 \end{equation}
 Indeed, this counts those $\ub{u} \in \Z^{\ell}_{\geq 0}$ with $(\pi(\ub{u}))_{d+1} = h$ that have exactly $t-1$ elements $\ub{0}\leq \ub{v} <_{\mathrm{lex}} \ub{u}$ with $\pi(\ub{v}) = \pi(\ub{u})$. The set of $\leq$-minimal elements of $\mathcal{U}^{(t)}$
 \[ \mathcal{M}^{(t)} = \mathcal{M}_{\widetilde{A}}^{(t)} := \{ \ub{z}\in \mathcal{U}^{(t)} ~| \not\hspace{-.2em}\exists \ub{w}\in\mathcal{U}^{(t)} \text{ s.t. } \ub{w} < \ub{z}\}, \]
 is, by Dickson's Lemma \ref{mannlm}, a finite set. Note that $\ub{u}\in\mathcal{U}^{(t)}$ if and only if there exists $\ub{t}\in \mathcal{M}^{(t)}$ such that $\ub{u} \geq \ub{t}$.
 
 We claim that
 \begin{align}
  &\#\{ \ub{u}\in \Z_{\geq 0}^{\ell} ~|~ \ub{u} \notin \mathcal{U}^{(t)} \text{ and } (\pi(\ub{u}))_{d+1} = h\} \nonumber \\
  &\hspace{5em}= \#\{ \ub{u}\in \Z_{\geq 0}^{\ell} ~|~ (\pi(\ub{u}))_{d+1} = h\} - \#\{ \ub{u}\in \mathcal{U}^{(t)} ~|~ (\pi(\ub{u}))_{d+1} = h\} \nonumber \\
  &\hspace{5em}= \sum_{T\subseteq \mathcal{M}^{(t)}} (-1)^{|T|}\, \#\{\ub{u}\in \Z^{\ell}_{\geq 0} ~|~ (\pi(\ub{u}))_{d+1} = h \text{ and } \ub{t}_{T} \leq \ub{u}\}, \label{incexc}
 \end{align}
 where the sum runs over all subsets $T$ of $\mathcal{M}^{(t)}$, and $(\ub{t}_T)_i := \max_{\ub{t}\in T} (\ub{t})_i$. To prove the last line, note that for each $\ub{u}\in \Z_{\geq 0}^{\ell}$ with $(\pi(\ub{u}))_{d+1} = h$, there exists a maximal $T = T_{\ub{u}} \subseteq \mathcal{M}^{(t)}$ such that $\ub{u} \geq \ub{t}_{T}$. The term $\#\{ \ub{u}\in \Z_{\geq 0}^{\ell} ~|~ (\pi(\ub{u}))_{d+1} = h\}$ corresponds to those $\ub{u}$ with $T_{\ub{u}} = \varnothing$. The remaining $\ub{u}$ with $T_{\ub{u}} \neq \varnothing$ (which are exactly the elements of $\mathcal{U}^{(t)}$) are counted $\sum_{U\subseteq T, U\neq \varnothing} (-1)^{|U|} = (1-1)^{k}-1 = -1$ times, yielding the term $\#\{ \ub{u}\in \mathcal{U}^{(t)} ~|~ (\pi(\ub{u}))_{d+1} = h\}$.
 
 Thus, by the standard ``stars and bars'' argument, we obtain from \eqref{incexc} that
 \begin{equation*}
  \#\{ \ub{u}\in \Z_{\geq 0}^{\ell} ~|~ \ub{u} \notin \mathcal{U}^{(t)} \text{ and } (\pi(\ub{u}))_{d+1} = h\} = \sum_{T\subseteq \mathcal{M}^{(t)}} (-1)^{|T|} \binom{h-\sigma(\ub{t}_T) + \ell -1}{\ell - 1},
 \end{equation*}
 where $\sigma(\ub{t}_T) := \sum_i (\ub{t}_T)_i$, provided $h\geq \sigma(\ub{t}_T)$ for every $T\subseteq \mathcal{M}^{(t)}$. Similarly, 
 \begin{align*}
  \#\{ \ub{u}\in \Z_{\geq 0}^{\ell} ~|~ \ub{u} \notin \mathcal{U}^{(t-1)} \text{ and } (\pi(\ub{u}))_{d+1} = h\} &= \sum_{S\subseteq \mathcal{M}^{(t-1)}} (-1)^{|S|} \binom{h-\sigma(\ub{t}_S) + \ell -1}{\ell - 1},
 \end{align*}
 therefore \eqref{hAt} is polynomial in $h$ for $h\geq h^{\mathrm{Kh}}_t(A)$, where
 \[ h^{\mathrm{Kh}}_t(A) \leq \max\Bigg\{\sum_{i} \max_{\ub{t}\in \mathcal{M}^{(t)}} (\ub{t})_i,\ \sum_{i} \max_{\ub{t}\in \mathcal{M}^{(t-1)}} (\ub{t})_i \Bigg\} < \infty \]
 (since $\mathcal{M}^{(t)}$, $\mathcal{M}^{(t-1)}$ are finite), completing the proof.\hfill$\square$

%%%%%%%%%%%%%%%%%%%%%%%%%%%%%%%%%%%%%%%%%%%%%%%%%%%%%%%%%% 
%%%%%%%%%%%%%%%%%%%%%%%%%%%%%%%%%%%%%%%%%%%%%%%%%%%%%%%%%%
\appendix
\section{\texorpdfstring{$(hA)^{(t)}$}{(hA)\^{}(t)} is always structured if \texorpdfstring{$|A|=3$}{|A|=3}}\label{appA}
 Let $A=\{0 = a_0 < a_1 <\cdots < a_{\ell} < a_{\ell+1} = m\}\subseteq \Z$ be a finite set of integers with $\gcd(A) = 1$, and let $t\geq 1$ be a fixed integer. Nathanson's original definition of structure in \cite{natSOFSI, natSOFSII} is as follows: there exist $c$, $d\in \Z_{\geq 0}$ and subsets $C\subseteq [0,c-2]$, $D\subseteq [0,d-2]$, all depending on $A$ and $t$, such that
 \begin{equation}
  (hA)^{(t)} = C \cup [c,\, hm-d] \cup (hm-D). \label{natdef}
 \end{equation}
 If one requires that $c\leq hm-d$ in order for the interval to be non-empty, then \eqref{natdef} becomes equivalent to having $(hA)^{(t)} = [hm]\setminus \big(\mathcal{E}_t(A) \cup (hm-\mathcal{E}_t(m-A)) \big)$ (as in \eqref{strt}) \emph{and} $h > (\Fr_t(A) + \Fr_t(m-A) + 1)/m$.\footnote{If $\{n+1,\ldots,n+m\}\subseteq \P_t(A)$, then since $m+\P_t(A) \subseteq \P_t(A)$, we have $\{n+1,n+2,\ldots\}\subseteq \P_t(A)$. So if $\{n+1,\ldots,n+m\}\subseteq [hm]\setminus \big(\mathcal{E}_t(A) \cup (hm-\mathcal{E}_t(m-A)) \big)$, then we must have $n\geq \Fr_t(A)$ and $n+m\leq hm-\Fr_t(m-A)-1$.} For instance, Yang--Zhou's \cite{yanzho21} ``$h\geq \sum_{i=2}^{\ell+1} (ta_i - 1) - 1$'' is sharp in the sense that, for every $m\geq 4$ and $t\geq 1$, the set $A=\{0,m-1,m\}$ is such that $(hA)^{(t)}$ satisfies \eqref{natdef} with a non-empty interval only for $h\geq tm-2$. At the same time, in this appendix we will show that if $|A|=3$, then $(hA)^{(t)}$ is structured for every $h$, $t\geq 1$, and $\Fr_t(A) = tam-a-m$. The case $t=1$ is Theorem 4 of Granville--Shakan \cite{grasha20}.
 
 Let $m\geq 2$ be an integer, and $A = \{0<a<m\}$ with $\gcd(a,m) = 1$. The first lemma is Theorem 4.4.1 of Ram\'irez-Alfons\'in \cite{alfonsin06}, written as in Austin \cite{austinMcN}.
 
 \begin{lem}\label{mcnuggs}
  Write $\{x\} := x-\lfloor x\rfloor$. For any $n\in \Z_{\geq 0}$, we have
  \begin{equation*}
   r_A(n) = \frac{n}{am} - \left\{\frac{na^{-1}}{m} \right\} - \left\{\frac{nm^{-1}}{a} \right\} + 1,
  \end{equation*}
  where $a^{-1}$ \textnormal{(resp. $m^{-1}$)} is an inverse of $a\pmod{m}$ \textnormal{(resp. $m\pmod{a}$)}.
 \end{lem}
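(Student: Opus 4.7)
The plan is to count non-negative integer solutions to $k_1 a + k_2 m = n$ directly. Since $\gcd(a,m)=1$, every integer solution has the form $(k_1^* + jm,\, k_2^* - ja)$ for $j\in\Z$, once a particular solution $(k_1^*,k_2^*)\in\Z^2$ is fixed. I would select the distinguished particular solution determined by $0\leq k_1^*<m$; the congruence $ak_1^*\equiv n\pmod m$ then forces $k_1^*\equiv na^{-1}\pmod m$, so that $k_1^* = m\{na^{-1}/m\}$ and $k_2^* = (n - k_1^* a)/m = n/m - a\{na^{-1}/m\}$.

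The non-negativity constraints $k_1^*+jm\geq 0$ and $k_2^*-ja\geq 0$ translate into $j\geq 0$ (since $k_1^*\in[0,m)$) and $j\leq k_2^*/a$, so the valid $j$ range has cardinality $\lfloor k_2^*/a\rfloor+1$ whenever this is non-negative. A short estimate using $k_1^*\leq m-1$ and $n\geq 0$ gives $k_2^*/a>-1$, hence the floor is always $\geq -1$ and $\rho_A(n) = \lfloor k_2^*/a\rfloor+1$ holds uniformly, automatically returning $0$ when $n$ is not representable. Using $\lfloor x\rfloor = x - \{x\}$ this becomes
\[ \rho_A(n) \;=\; \frac{n}{am} \;-\; \Bigl\{\frac{na^{-1}}{m}\Bigr\} \;-\; \Bigl\{\frac{n}{am} - \Bigl\{\frac{na^{-1}}{m}\Bigr\}\Bigr\} \;+\; 1. \]

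It remains to identify the compound fractional part, and this is the step I expect to be the main obstacle (albeit a mild one): one must show
\[ \Bigl\{\frac{n}{am} - \Bigl\{\frac{na^{-1}}{m}\Bigr\}\Bigr\} \;=\; \Bigl\{\frac{nm^{-1}}{a}\Bigr\}. \]
Working modulo $1$, the left side equals $\{n/(am) - na^{-1}/m\} = \{(n - naa^{-1})/(am)\}$. Writing $aa^{-1} = 1 + km$ with $k\in\Z$, this collapses to $\{-nk/a\}$, so the identity reduces to the congruence $-k \equiv m^{-1}\pmod a$. That, in turn, follows from reducing the defining equation $mk = aa^{-1}-1$ modulo $a$, which yields $mk\equiv -1\pmod a$, i.e. $m\cdot(-k)\equiv 1\pmod a$. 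Everything else is bookkeeping, and substituting the identity into the displayed equation above completes the proof.
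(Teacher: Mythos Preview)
Your proof is correct and follows essentially the same route as the paper: parametrize all integer solutions from the distinguished one with $0\le k_1^*<m$, count the admissible shifts, and rewrite the resulting floor as the claimed expression. In fact your write-up is slightly more complete than the paper's, since you handle the $\rho_A(n)=0$ case explicitly and you justify the identity $\{k_2^*/a\}=\{nm^{-1}/a\}$, which the paper simply asserts in its last line.
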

 \begin{proof}
  Let $u_0$ be the unique $0\leq u_0< m$ such that $\lmod{n}{au_0}{m}$. Then, as $\gcd(a,m)=1$, there is $v_0\in \Z_{>-a}$ for which $n = au_0 + mv_0$. If $v_0\geq 0$, then the set of solutions to $n = au+mv$ ($u,v\in \Z_{\geq 0}$) is described by
  \begin{equation}
   u = u_0 +im,\ v = v_0 - ia \qquad \text{for } 0\leq i\leq \lfloor v_0/a\rfloor,\label{solspr}
  \end{equation}
  while if $v_0<0$, there are no solutions $u,v\in\Z_{\geq 0}$. Therefore, if $r_{A}(n) = t$ for some $t\geq 0$, we have $\lfloor v_0/a\rfloor = t-1$, and thus
  \begin{equation}
   v_0 = (t-1)a + \left\{\frac{v_0}{a}\right\}a. \label{v0eq}
  \end{equation}
  This implies that
  \begin{equation}
   n= au_0 + am(t-1) + \{v_0/a\}am, \label{gv0}
  \end{equation}
  so
  \begin{align*}
   r_{A}(n) = t &= \frac{n}{am} - \frac{u_0}{m} - \left\{\frac{v_0}{a}\right\} + 1 \\
   &= \frac{n}{am} - \left\{\frac{na^{-1}}{m}\right\} - \left\{\frac{nm^{-1}}{a}\right\} + 1.\qedhere
  \end{align*}
 \end{proof}
 
 \begin{cor}\label{EtA}
  We have:
  \begin{enumerate}[label=\textnormal{(\roman*)}]
  \item $\P_t(A) = (t-1)am + \P(A)$;\medskip
  
  \item $\mathcal{E}_t(A) = [(t-1)am-1] \cup \big((t-1)am + \mathcal{E}(A)\big)$.
 \end{enumerate}
 \end{cor}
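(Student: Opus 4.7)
The plan is to deduce the corollary directly from Lemma \ref{mcnuggs} by observing that the representation function satisfies a clean shift identity with period $am$. Specifically, the formula
\[ \rho_A(n) = \frac{n}{am} - \left\{\frac{na^{-1}}{m}\right\} - \left\{\frac{nm^{-1}}{a}\right\} + 1 \]
is the sum of a linear term and two terms that depend only on $n$ modulo $am$: indeed, $am\cdot a^{-1}/m = a\cdot a^{-1}\in \Z$ and $am\cdot m^{-1}/a = m\cdot m^{-1}\in\Z$, so both fractional parts are invariant under $n\mapsto n+am$. Since the linear term $n/(am)$ increases by exactly $1$, I obtain
\[ \rho_A(n+am) = \rho_A(n) + 1 \qquad (n\geq 0). \]
Iterating, $\rho_A(n + kam) = \rho_A(n) + k$ for every $n,k\geq 0$.

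For the forward direction of (i), suppose $n \geq (t-1)am$ and write $n = (t-1)am + n'$ with $n'\geq 0$. Then $\rho_A(n) = \rho_A(n') + (t-1)$, which is $\geq t$ iff $\rho_A(n')\geq 1$, i.e.\ iff $n'\in \mathcal{P}(A)$. To handle $n < (t-1)am$, I read off from the formula the pointwise upper bound
\[ \rho_A(n) \,\leq\, \tfrac{n}{am} + 1 \,\leq\, \lfloor n/(am)\rfloor + 1, \]
using that $\rho_A(n)\in\Z_{\geq 0}$. Hence $n < (t-1)am$ forces $\rho_A(n) \leq t-1$, so such $n$ lies outside $\mathcal{P}_t(A)$. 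Combining both cases yields the equality $\mathcal{P}_t(A) = (t-1)am + \mathcal{P}(A)$ of part (i), since $\mathcal{P}(A)\subseteq \Z_{\geq 0}$.

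Part (ii) then follows purely by complementing inside $\Z_{\geq 0}$. The upper bound $\rho_A(n)\leq \lfloor n/(am)\rfloor + 1$ already shows $\{0,1,\ldots,(t-1)am-1\} = [(t-1)am-1] \subseteq \mathcal{E}_t(A)$. For $n\geq (t-1)am$, the identity from part (i) gives $n\in \mathcal{E}_t(A)$ iff $n - (t-1)am \in \Z_{\geq 0}\setminus \mathcal{P}(A) = \mathcal{E}(A)$, which is the remaining piece $(t-1)am + \mathcal{E}(A)$ of the claimed decomposition.

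No part of this requires serious work beyond Lemma \ref{mcnuggs}; the only mild care needed is to verify both directions of the shift argument, i.e.\ that small $n$ cannot accidentally lie in $\mathcal{P}_t(A)$, which is exactly what the upper bound $\rho_A(n)\leq \lfloor n/(am)\rfloor + 1$ provides. This makes the corollary essentially a one-line consequence of the periodicity $\rho_A(n+am) = \rho_A(n)+1$.
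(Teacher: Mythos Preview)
Your proof is correct and follows essentially the same route as the paper: both extract from Lemma~\ref{mcnuggs} the identity $\rho_A(n+am)=\rho_A(n)+1$ (the paper phrases this as $\rho_A(Nam+r)=N+1-\{ra^{-1}/m\}-\{rm^{-1}/a\}$), then use it to reduce $\mathcal{P}_t$ to $\mathcal{P}$ and read off $\mathcal{E}_t$ by complementation. One cosmetic slip: the displayed chain $\rho_A(n)\le n/(am)+1\le \lfloor n/(am)\rfloor+1$ is literally false as written; what you mean (and say in words) is that integrality of $\rho_A(n)$ lets you replace the real bound $n/(am)+1$ by $\lfloor n/(am)\rfloor+1$, so just drop the middle term.
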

 \begin{proof}
  By Lemma \ref{mcnuggs}, note that for $N\geq 1$ and $0\leq k<am$, we have $r_{A}(k)\leq 1$ and
  \begin{align*}
   r_A(Nam + k) &= N + \frac{k}{am} - \left\{\frac{ka^{-1}}{m} \right\} - \left\{\frac{km^{-1}}{a} \right\} + 1 \\
   &= N + r_{A}(k).
  \end{align*}
  so $\min_{n\in \P_t(A)} n = (t-1)am$, corresponding to $N=t-1$ and $k=0$. For $n\geq (t-1)am$, it follows that $r_A(n-(t-1)am) \geq 1$ (i.e., $n-(t-1)am \in \mathcal{P}(A)$) if and only if $r_A(n) \geq t$ (i.e., $n\in \mathcal{P}_t(A)$), concluding part (i). For part (ii),
  \begin{align*}
   \mathcal{E}_t(A) = \Z_{\geq 0}\setminus \P_t(A) &= \Z_{\geq 0}\setminus\big((t-1)am +\P(A)\big) \\
   &= [(t-1)am-1] \cup \big((t-1)am + \mathcal{E}(A)\big).\qedhere
  \end{align*}
 \end{proof}
 
 %%%%%%%%%%%%
 \begin{cor}\label{3prop}
  We have:
  \begin{enumerate}[label=\textnormal{(\roman*)}]
  \item $\Fr_t(A) = tam-a-m$;\medskip
  
  \item $\#\mathcal{E}_t(A) = (t-1)am + \tfrac{1}{2}(a-1)(m-1)$.
 \end{enumerate}
 \end{cor}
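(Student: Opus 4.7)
The plan is to derive both parts directly from Corollary \ref{EtA} together with the two classical Sylvester--Frobenius identities for the two-generator numerical semigroup $\langle a, m \rangle$ with $\gcd(a,m)=1$:
\[
\Fr(A) = am - a - m, \qquad |\mathcal{E}(A)| = \tfrac{1}{2}(a-1)(m-1).
\]
The first is the content of Sylvester's 1882 result mentioned in the introduction, and the second is the companion formula for the number of non-representable non-negative integers (both are standard and proven in, e.g., Ram\'irez-Alfons\'in \cite{alfonsin06}; they can also be read off Lemma \ref{mcnuggs} by setting $t=1$ and summing over residues).

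For part (i), by Corollary \ref{EtA}(ii), the set $\mathcal{E}_t(A)$ is the disjoint union of $\{0,1,\ldots,(t-1)am-1\}$ and $(t-1)am + \mathcal{E}(A)$. The maximum element therefore lies in the second piece (provided $\mathcal{E}(A)\neq\varnothing$, i.e. $(a,m)\neq(1,\cdot)$; otherwise the formula is trivial), and equals
\[
(t-1)am + \Fr(A) = (t-1)am + (am - a - m) = tam - a - m.
\]

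For part (ii), the two pieces in the decomposition of $\mathcal{E}_t(A)$ given by Corollary \ref{EtA}(ii) are genuinely disjoint, since $\mathcal{E}(A)\subseteq [0, am-a-m]$ shifts into the interval $[(t-1)am,\, tam-a-m]$ which lies strictly above $(t-1)am - 1$. Hence
\[
|\mathcal{E}_t(A)| = (t-1)am + |\mathcal{E}(A)| = (t-1)am + \tfrac{1}{2}(a-1)(m-1).
\]

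There is no real obstacle here: once Corollary \ref{EtA} is in hand, both statements reduce to invoking the classical $t=1$ Sylvester identities. The only thing to be careful about is making the disjointness of the two parts of the decomposition explicit, so that $|\mathcal{E}_t(A)|$ splits additively as a sum of cardinalities rather than requiring an inclusion-exclusion correction.
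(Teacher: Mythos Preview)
Your proposal is correct and follows essentially the same approach as the paper: both reduce to the case $t=1$ via Corollary~\ref{EtA}, and then invoke the classical Sylvester--Frobenius identities $\Fr(A)=am-a-m$ and $|\mathcal{E}(A)|=\tfrac{1}{2}(a-1)(m-1)$. The only difference is that the paper supplies a short self-contained proof of these $t=1$ identities from Lemma~\ref{mcnuggs} (via the involution $n\mapsto am-n$), whereas you cite them as known; since you explicitly note they can be read off Lemma~\ref{mcnuggs}, this is a presentational rather than a mathematical difference.
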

 \begin{proof}
  By Corollary \ref{EtA}, it suffices to show the result for $t=1$; that is, $\Fr(A) = am-a-m$ and $\#\mathcal{E}(A) = \frac{1}{2}(a-1)(m-1)$. These are Theorems 2.1.1 and 5.1.1 in Ram\'irez-Alfons\'in \cite{alfonsin06}, respectively, to which we provide a short proof.
  
  First, using Lemma \ref{mcnuggs} one checks that $r_A(am-a-m) =0$, while $r_A(n) > 0$ whenever $n>am-a-m$, thus proving (i). For (ii), we claim that $r_A(n) + r_A(am-n) = 1$ for every $0\leq n \leq am$ such that $a\nmid n$, $m\nmid n$. Indeed,
  \begin{align*}
   r_A(am -n) &= \frac{am-n}{am} - \bigg\{\frac{(am-n)a^{-1}}{m}\bigg\} - \bigg\{\frac{(am-n)m^{-1}}{a} \bigg\} +1 \\
   &= 1 - \frac{n}{am} -\bigg\{-\frac{na^{-1}}{m}\bigg\} -\bigg\{-\frac{nm^{-1}}{a}\bigg\} + 1 \\
   &= 1 - \frac{n}{am} +\bigg\{\frac{na^{-1}}{m}\bigg\} +\bigg\{\frac{nm^{-1}}{a}\bigg\} - 1 = 1- r_A(n).
  \end{align*}
  So, for $n$ such that $a\nmid n$, $m\nmid n$, we have $n\in[am]\setminus\mathcal{E}(A)$ if and only if $am-n\in \mathcal{E}(A)$  This, together with the fact that multiples of $a$ or $m$ are representable, yields
  \begin{align*}
   \#\mathcal{E}(A) &= \#[am] - \#(am - \mathcal{E}(A)) - \#\{n\in [am] ~;~ a\mid n \text{ or } m \mid n \} \\
   &= (am+1) - \#\mathcal{E}(A) - (a+m)  = (a-1)(m-1) - \#\mathcal{E}(A),
  \end{align*}
  therefore $\#\mathcal{E}(A) = \frac{1}{2}(a-1)(m-1)$.
 \end{proof}
 
 %%%%%%%%%%%%%%%%%5
 \begin{thm}\label{str3}
  For every $h$, $t\geq 1$,
  \[ (hA)^{(t)} =[hm] \setminus\big(\mathcal{E}_t(A) \cup (hm- \mathcal{E}_t(m-A)) \big); \]
  i.e., $(hA)^{(t)}$ is always structured.
 \end{thm}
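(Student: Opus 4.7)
The plan is to prove the two inclusions in the claimed identity separately. The inclusion $(hA)^{(t)} \subseteq [hm]\setminus\big(\mathcal{E}_t(A) \cup (hm-\mathcal{E}_t(m-A))\big)$ is automatic for any finite~$A$, since $\rho_{A,h}(n)\leq \rho_A(n)$ always, and the bijection swapping each summand $a\in A$ with $m-a\in m-A$ yields $\rho_{A,h}(n) = \rho_{m-A,h}(hm-n)\leq \rho_{m-A}(hm-n)$. All of the content lies in the reverse inclusion.

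For that, the approach is to leverage the explicit parameterization of non-negative solutions to $ua+vm=n$ used inside the proof of Lemma~\ref{mcnuggs}. Let $n$ lie in the right-hand side, and let $(u_0,v_0)\in \Z_{\geq 0}^{2}$ be the unique pair with $u_0 a + v_0 m = n$ and $0\leq u_0 < m$. By~\eqref{solspr}, the non-negative pair solutions to $ua + vm = n$ are exactly $(u_0 + im,\, v_0 - ia)$ for $0\leq i\leq \lfloor v_0/a\rfloor$. Each such pair extends to a triple $(w,u,v)$ representing $n$ in $hA$, with $w := h - u - v$ being the multiplicity of $0$, precisely when $u+v\leq h$. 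Since $u+v = u_0 + v_0 + i(m-a)$ grows linearly in $i$ (using $m>a$), this sum constraint becomes $i\leq \lfloor (h-u_0-v_0)/(m-a)\rfloor$; hence
\[
 \rho_{A,h}(n) \;=\; \#\Big\{i\in \Z_{\geq 0} \;\Big|\; i \leq \min\!\big(\lfloor v_0/a\rfloor,\, \lfloor (h-u_0-v_0)/(m-a)\rfloor\big)\Big\}.
\]

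To finish, I link the two upper bounds to the two hypotheses. A direct rearrangement of $u_0 a + v_0 m = n$ gives $u_0(m-a) + (h-u_0-v_0)\,m = hm-n$, and since $0\leq u_0 < m$, this is exactly the canonical Lemma~\ref{mcnuggs} decomposition of $hm-n$ for the set $m-A$. Thus $\rho_A(n) = \lfloor v_0/a\rfloor + 1$ and $\rho_{m-A}(hm-n) = \lfloor (h-u_0-v_0)/(m-a)\rfloor + 1$ (the latter tacitly needing $h\geq u_0+v_0$, which is ensured by $\rho_{m-A}(hm-n)\geq t\geq 1$). The hypotheses $\rho_A(n)\geq t$ and $\rho_{m-A}(hm-n)\geq t$ then give $\lfloor v_0/a\rfloor\geq t-1$ and $\lfloor (h-u_0-v_0)/(m-a)\rfloor\geq t-1$, so each of $i=0,1,\ldots,t-1$ produces a valid triple $(w,u,v)$ with all three coordinates non-negative. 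These $t$ triples are pairwise distinct, since their $u$-coordinates $u_0,u_0+m,\ldots,u_0+(t-1)m$ differ, so $\rho_{A,h}(n)\geq t$.

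The main obstacle is only the bookkeeping in the previous paragraph: recognizing that the canonical $(u_0,v_0)$-data for $n$ with respect to $A$ and the analogous data for $hm-n$ with respect to $m-A$ share the same first coordinate, while their second coordinates sum to $h-u_0$. Once this symmetry is spotted, everything else is an immediate consequence of Lemma~\ref{mcnuggs}; in particular, no further input (such as the explicit Frobenius number from Corollary~\ref{3prop}) is required.
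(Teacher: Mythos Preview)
Your proof is correct and follows essentially the same approach as the paper: parameterize the representations of $n$ by the canonical pair $(u_0,v_0)$ from Lemma~\ref{mcnuggs}, recognize that the canonical pair for $hm-n$ with respect to $m-A$ is $(u_0,\,h-u_0-v_0)$ (the paper phrases this as $u_0'=u_0$ via a congruence), and deduce from the two hypotheses that the representations indexed by $i=0,\ldots,t-1$ all satisfy $u+v\le h$. Your formulation via $\rho_{A,h}(n)=1+\min(\lfloor v_0/a\rfloor,\lfloor(h-u_0-v_0)/(m-a)\rfloor)$ is in fact a bit more transparent than the paper's fractional-part bookkeeping, but the underlying argument is identical.
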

 \begin{proof}
  Let $h\geq 1$, and take $n\in[hm]$ with $n\notin \mathcal{E}_t(A) \cup (hm - \mathcal{E}_t(m-A))$ -- i.e., $r_A(n)$, $r_{m-A}(hm-n) \geq t$. We want to show that this implies that $n\in (hA)^{(t)}$. As in \eqref{solspr}, since $i$ can go up to at least $t-1$, we have at least $t$ representations $n = au+mv$ with $u$, $v$ satisfying, by \eqref{v0eq},
  \begin{align*}
   u+ v &= u_0 + v_0 +i(m-a) \\
   &\leq u_0 + v_0 + (t-1)m  - (t-1)a \\
   &= u_0 + (t-1)m + \left\{\frac{v_0}{a}\right\}a.
  \end{align*}
  
  Now we apply the same argument to $m-A$. Let $u'_0$ be the unique $0\leq u'_0<m$ such that $\lmod{hm-n}{(m-a)u'_0}{m}$, and $v'_0$ such that $hm-n = (m-a)u'_0 + mv'_0$. But $0 \equiv hm = n + (hm-n) \equiv au_0 + (m-a)u'_0 \equiv a(u_0 - u'_0) \pmod{m}$, implying that $u_0 = u'_0$. Thus, applying \eqref{gv0} to $n$ (with respect to $A$) and to $hm-n$ (with respect to $m-A$), we get
  \begin{align*}
   hm &= n + (hm-n) = \bigg(au_0 + (t-1)am + \bigg\{\frac{v_0}{a}\bigg\}am\bigg)\\
   &\hspace{9em}+ \bigg((m-a)u'_0 + (t-1)(m-a)m + \bigg\{\frac{v'_0}{m-a}\bigg\}(m-a)m\bigg) \\
   &= a(u_0 - u'_0)  + m\bigg( u'_0 + (t-1)m + \bigg\{\frac{v_0}{a}\bigg\}a + \bigg\{\frac{v'_0}{m-a}\bigg\}(m-a) \bigg) \\
   &\geq m\bigg( u_0 + (t-1)m + \bigg\{\frac{v_0}{a}\bigg\}a \bigg),
  \end{align*}
  and so $u+v \leq h$, implying that $r_{A,h}(n) \geq t$.
 \end{proof}
 
 %%%%%%%%%%%%%%%%%%%%%%%
\addtocontents{toc}{\protect\setcounter{tocdepth}{0}}
\section*{Acknowledgements}
 I'm grateful to Andrew Granville for his invaluable advice and several insightful suggestions during the writing of this article. I also thank Cihan Sabuncu for the numerous discussions, and the anonymous referees for their very careful reading.
 
\addtocontents{toc}{\protect\setcounter{tocdepth}{1}}
%%%%%%%%%%%%%%%%%%%%%%%

% ----------------------------------------------------------------
\bibliographystyle{amsplain}
\bibliography{$HOME/Academie/Recherche/_latex/bibliotheca}%
\end{document}